\newtheorem{thm}{Theorem}[section]
\newtheorem{lem}[thm]{Lemma}
\newtheorem{cor}[thm]{Corollary}
\newtheorem{prop}[thm]{Proposition}
\newtheorem{defi}[thm]{Definition}
\theoremstyle{definition}
\newtheorem*{remark}{Remark}
\newtheorem{exam}{Example}[section]
\newcommand{\coker}{\operatorname{coker}}
\newcommand{\im}{\operatorname{im}}
\newcommand{\Aut}{\operatorname{Aut}}
\newcommand{\rat}{\mathbb{Q}}                          
\newcommand{\ra}{\rightarrow}
\newcommand{\mZ}{\mathbb{Z}}
\newcommand{\mF}{\mathbb{F}}
\newcommand{\mQ}{\mathbb{Q}}
\newcommand{\mN}{\mathbb{N}}
\newcommand{\Ext}{\operatorname{Ext}}
\newcommand{\Hom}{\operatorname{Hom}}
\newcommand{\Ann}{\operatorname{Ann}}
\newcommand{\GL}{\operatorname{GL}}
\renewcommand{\rm}{\mathrm}
\newcommand{\mbf}{\mathbf}
\newcommand{\Cal}{\mathcal}
\newcommand{\hra}{\hookrightarrow}
\newcommand{\Fix}{\operatorname{Fix}}
\newcommand{\lda}{\lambda}
\renewcommand{\gcd}{\operatorname{gcd}}
\newcommand{\fr}{\mathfrak}
\newcommand{\bs}{\boldsymbol}
\newcommand{\scr}{\mathscr}
\newcommand{\lra}{\longrightarrow}
\newcommand{\id}{\operatorname{id}}
\newcommand{\ol}{\overline}
\title{Higman's PORC conjecture for a family of groups}
\author{Anton Evseev}
\date{}
\begin{document}

\maketitle

\abstract{We prove that the number of groups of order $p^n$ whose Frattini subgroup is central is for fixed $n$ a PORC (`polynomial on residue classes') function of $p$. This extends a result of G. Higman.}

\section{Introduction}

A rational-valued function $f(x)$ defined on a set $D$ of integers 
is said to be~\emph{polynomial} on $D$ if there exists a polynomial $P$ with
rational coefficients such that $f(x)=P(x)$ for all $x\in D$. 
A function $f(x)$ defined on a set $E$ of integers is said to be~\emph{PORC} on $E$ (`polynomial on residue classes') if there exists a positive integer $N$ such that $f(x)$ is polynomial on the intersection of $E$ with each residue class mod $N$; that is, $f(x)$ is polynomial on the sets 
$\{j+lN: l\in \mZ\}\cap E$, $j=0,1,\ldots,N-1$. 

If $p$ is a prime and $n\in \mN$, denote 
 the number of isomorphism classes of groups of order $p^n$ by $G_n (p)$. 
Table~\ref{table1} summarises most of the current knowledge of the values of $G_n (p)$ (see \cite{p6,p7} and references therein for details). The most recent 
result in this direction is an enumeration of all groups of order $p^7$ by E.R.~O'Brien and M.R.~Vaughan-Lee \cite{p7}.
\begin{table}
\caption{The values of $G_n (p)$ for $n\le 7$ 
and large enough $p$ (as specified)}\label{table1}
$$
\begin{array}{|c|l|}
\hline
n & \multicolumn{1}{c|}{G_n (p)} \\
\hline
1 & 1 \\
\hline
2 & 2 \\
\hline
3 & 5 \\
\hline
4 & 15 \text{ if } p\ge 3 \\
\hline
5 & 2p+61+2\gcd(p-1,3)+\gcd(p-1,4) \text{ if } p\ge 5 \\
\hline
6 & 3p^2+39p+344+24\gcd(p-1,3)+11\gcd(p-1,4)+2\gcd(p-1,5) \text{ if } p\ge 5 \\
\hline
7 &
\!\!\! \begin{array}{l}3p^5 + 12p^4 + 44p^3 + 170p^2 + 707p + 2455 + 
(4p^2+44p+291)\gcd(p-1,3) \\
+ (p^2+19p+135)\gcd(p-1,4)+(3p+31)\gcd(p-1,5)+4\gcd(p-1,7) \\ 
+5\gcd(p-1,8)+\gcd(p-1,9) \text{ if } p>5
\end{array} \\
\hline
\end{array}
$$
\end{table}
Note that $G_n (p)$ is a PORC function of $p$ for $n\le 7$.
G.~Higman conjectured in 1960 that, for a fixed $n$, $G_n (p)$ is PORC 
(see~\cite{Higman1, Higman}).

We shall concentrate on groups of order $p^n$ of nilpotency class 2 for an arbitrary $n$.  G.~Higman~\cite{Higman} proved that the number of groups of order $p^n$ whose Frattini subgroup is elementary abelian and central is a PORC function of $p$. In this paper we extend Higman's theorem by proving the following 
result.

\begin{thm} \label{PORCthm} For a fixed natural number $n$, the number of isomorphism classes of groups of order $p^n$ whose Frattini subgroup is central, considered as a function of the prime $p$, is PORC.\end{thm}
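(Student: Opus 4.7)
The plan is to follow Higman's strategy from the elementary abelian Frattini case, augmenting it to handle the additional linear-algebraic data that arise when $\Phi(G)$ is merely central abelian, not necessarily elementary abelian. Throughout I take $p$ odd, since the prime $p=2$ contributes a single value for each $n$, which does not affect the PORC conclusion.

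Let $G$ have order $p^n$ with $\Phi(G)\le Z(G)$, so $G$ has nilpotency class at most $2$. Put $W=\Phi(G)$ and $V=G/W$; then $V\cong \mF_p^d$ and $W$ is a central abelian $p$-group of order $p^{n-d}$, whose isomorphism type is encoded by a partition $\lambda$ of $n-d$. The commutator gives an alternating $\mF_p$-bilinear map $c\colon V\wedge V\to W$, and the Hall-Petresco formula (using that $p$ is odd, so $\binom{p}{2}$ is divisible by $p$) shows that the $p$-th power map on $G$ descends to a well-defined $\mF_p$-linear map $\pi\colon V\to W/W^p$. The requirement $\Phi(G)=W$ translates into the spanning condition that $\im c$, together with any set-theoretic lift of $\im\pi$ and the subgroup $W^p$, generates $W$. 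Conversely, any triple $(W,c,\pi)$ satisfying this spanning condition determines $G$ up to a canonical isomorphism of central extensions, and two such triples yield isomorphic groups precisely when they lie in the same orbit under the natural action of $\Aut(V)\times\Aut(W)$.

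Since the number of pairs $(d,\lambda)$ with $d+|\lambda|=n$ is finite and independent of $p$, the theorem reduces to showing that for each such $(d,\lambda)$ the number of $\Aut(V)\times\Aut(W)$-orbits on the set of admissible $(c,\pi)$ is a PORC function of $p$. The set of admissible $(c,\pi)$ is cut out by Zariski-open conditions from an affine scheme defined over $\mZ$, and for fixed $\lambda$ the group $\Aut(W)$ is the $\mZ/p^k$-points (for $k$ the largest part of $\lambda$) of an affine group scheme over $\mZ$ of order polynomial in $p$. Its Levi quotient is a product of groups of the form $\GL_{r_i}(\mZ/p^{\lambda_i})$, one factor for each distinct part size of $\lambda$, and its unipotent radical is controlled by the non-homogeneous parts of $\lambda$.

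The main technical step, and the obstacle I expect to be the most delicate, is showing that this orbit count is PORC. My plan is to filter $W$ by its characteristic subgroups $W\supseteq pW\supseteq p^2 W\supseteq\cdots$, split $c$ and $\pi$ into the corresponding graded layers, and peel off the unipotent radical of $\Aut(W)$ layer by layer by orbit-averaging arguments, which preserve PORC-ness because the averaged groups are $p$-groups of polynomial-in-$p$ order acting linearly on affine $\mZ$-schemes. This should reduce the problem to an orbit-counting problem for a reductive group of the form $\Aut(V)\times\prod_i \GL_{r_i}(\mF_p)$ acting linearly on an affine scheme over $\mZ$, to which a suitable generalization of the PORC lemma underlying Higman's original theorem can be applied. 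Summing over the finitely many strata $(d,\lambda)$ then yields the result.
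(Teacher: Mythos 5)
Your reduction to the data $(c,\pi)\in\Hom(\bigwedge^2 V, W[p])\oplus(V^*\otimes W/pW)$, with the group $\Aut(V)\times\Aut(W)$ acting, is exactly the right starting point and parallels the paper (which reaches the same data by Lazard-correspondence to class-$2$ Lie algebras over $\mZ_p$ plus a natural description of $\Ext^1$). But the heart of the theorem is the step you describe as ``peel off the unipotent radical of $\Aut(W)$ layer by layer by orbit-averaging arguments, which preserve PORC-ness,'' and that step is a genuine gap, not a routine adaptation of Higman. Burnside's lemma reduces the orbit count to summing $|\Fix|$ over elements of $\Aut(V)\times\Aut(W)$; what makes Higman's original argument work is that for $\GL_{\mbf n}(q)$ the contribution of each conjugacy \emph{type} can be isolated and shown polynomial. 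After replacing $\Aut(W)$ (for general $\lambda$) the analogous statement is far from automatic, and quotienting by the congruence/unipotent kernel does not leave you with a linear action of a reductive group on an affine $\mZ$-scheme: the orbit space of the unipotent part is not linear, and the structure of its orbits (or equivalently the conjugacy data of the unipotent radical over $\mZ/p^k$) is itself a nontrivial uniformity-in-$p$ problem. Nothing in your sketch controls this.

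The paper's resolution is different and is the content you are missing. It observes that $\Aut(W)$ acts on $\Cal F$ only through its image in $\GL(W/pW)\times\GL(W[p])$, i.e.\ through a subgroup of $\GL_{s,s}(p)$, and it introduces the notion of a \emph{uniform family of subgroups} $H(q)\le\GL_{\mbf n}(q)$: for each conjugacy type the intersection $|C\cap H(q)|$ is independent of the class $C$ of that type and polynomial in $q$. Higman's orbit-counting theorem is then generalized to allow the group to be such a uniform (or PORC) family rather than all of $\GL_{\mbf n}(q)$. The technical work is Sections~5--6: Hall polynomials and Corollary~\ref{Hall:ind} give uniformity for parabolic flag stabilizers (Proposition~\ref{Hall:gen}), and Lemma~\ref{autim} identifies the image of $\Aut(W)$ in $\GL_{s,s}(p)$ as a ``block lower-triangular $\times$ block upper-triangular, with matching diagonals'' subgroup, whose uniformity is then derived from the parabolic case (Lemma~\ref{sigH}, Proposition~\ref{uniB}). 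That is what replaces your heuristic ``orbit-averaging.'' Two smaller points: (a) you impose the admissibility (spanning) condition up front, which means you must count orbits on a proper invariant open subset of $\Cal F$, a situation Theorem~\ref{Hig} as stated does not cover; the paper avoids this by taking $B=Z(E)$ (so it counts orbits on the whole space $\Cal F$) and then recovers the exact condition $Z(E)=B$ by a downward induction on flags (the $F_{m,\mbf d,\lda}$ and $X_{m,\mbf d,\lda}$ bookkeeping). (b) Your confidence that the Levi quotient of $\Aut(W)$ is a product of $\GL_{r_i}(\mZ/p^{\lambda_i})$ is off: the relevant reductive quotient is a product of $\GL_{r_i}(\mF_p)$'s, and in any case the point is to sidestep $\Aut(W)$'s internal structure entirely by looking only at its image in $\GL(W/pW)\times\GL(W[p])$.
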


\begin{remark} The groups counted in Theorem~\ref{PORCthm} are precisely the groups $G$ of order $p^n$ such that $[x,y^p]=1$ and $[[x,y],z]=1$ for all 
$x,y,z\in G$.  
\end{remark}

This paper is organised as follows. 
In Section~\ref{set-up} we use Lazard correspondence to restate 
Theorem~\ref{PORCthm} in terms of Lie algebras. 
We also define the concepts used in the proof. 
In Section~\ref{types} we prove a number of basic properties of those concepts.  Section~\ref{higthm} contains a generalisation of \cite[Theorem 1.2.1]{Higman}, which is key to the proof of Theorem~\ref{PORCthm}. Section \ref{Hallpoly} 
develops corollaries from the existence of Hall polynomials 
and from similar results. 
In Section~\ref{autsection} we investigate automorphisms of finite modules over discrete valuation rings. In Section~\ref{Extsection} we determine the functor
$\Ext^1 (-,-)$ for finitely generated modules over principal ideal domains up to natural equivalence. 
Finally, in Section~\ref{finproof}, we put all these 
results together to prove Theorem~\ref{PORCthm}. 

\vspace{0.8cm}

\textit{Notation and definitions}

\begin{itemize}
\item $\gamma(G,X)$ denotes the number of orbits of an action of a group $G$ on a finite set $X$, where the action is understood;
\item $[k,n]=\{k,k+1,k+2,\ldots,n\}$ where $k \le n$ are integers;
\item $|X|$ denotes the cardinality of a set $X$;
\item $\id_X$ is the identity map $X\ra X$;
\item $\delta_{ij}=0$ if $i\ne j$, and $\delta_{ii}=1$;
\item $\mF_q$ is the field with $q$ elements (if $q$ is a prime power);
\item $\mZ_p$ is the ring of $p$-adic integers (if $p$ is a prime);
\item If $x,y$ are elements of a group, then $[x,y]=x^{-1}y^{-1}xy$;  
\item $Z(G)$ is the centre of a group $G$;
\item $C_G (g)=\{h\in G: gh=hg \}$ is the centraliser of an element $g$ of a group $G$;
\item A $G$-\emph{set} is a set $X$ together with an action of $G$ on $X$;
\item $\phi:X \ra Y$ is an \emph{isomorphism of $G$-sets} $X$ and $Y$ if $\phi$
is a bijection and $\phi(gx)=g(\phi(x))$ for all $x\in X$, $g\in G$;
\item $Z(L)$ denotes the centre of a Lie algebra $L$;
\item If $A$ is a module over a commutative ring containing $t$, then $A[t]=\{a\in A:\; ta=0\}$;
\item $\GL(V)$ is the group of invertible linear transformations of a vector space $V$;
\item $\scr P(V;U_1,\ldots,U_k):=\{ f\in \GL(V): f(U_i)\subseteq U_i \; \forall i \}$ where $U_i$ are subspaces of $V$;
\item $\GL_n (K)=\GL(K^n)$ if $K$ is a field;
\item $\GL_n (q)=\GL_n (\mF_q)$ where $q$ is a prime power;
\item $U\le V$: $U$ is a subspace of a vector space $V$;  
\item $U<V$: $U$ is a subspace of $V$ and $U\ne V$;
\item $V^*$ is the dual space of a vector space $V$;
\item If $R$ is a commutative ring and $A$ is an $R$-module, then 
$\Aut(A)$ is the group of automorphisms $A$;
\item $(t)=tR$ is the principal ideal generated by an element 
$t$ of a commutative ring $R$; 
\item $R[X]$ is the ring of polynomials in one variable over a commutative ring $R$;
\item $R[[X]]$ is the ring of formal power series in one variable in $R$;
\item $\Hom_R (A,B)$ is the $R$-module of homomorphisms between
$R$-modules $A$ to $B$;
\item $\Ann(a):=\{r\in R: ra=0 \}$ is the annihilator of an element $a$ of 
an $R$-module $A$;
\item  If $\tau:A \ra B$ is an $R$-module homomorphism, then 
$\coker\tau=B/\im\tau$.
\item $\bigwedge\nolimits^2 A$ is the exterior square of a module $A$;
\item A \emph{partition} is a finite (possibly, empty) 
sequence $\lda=(\lda_1,\ldots,\lda_s)$ of
positive integers such that $\lda_1\ge \cdots \ge \lda_s$;
\item $|\lda|:=\lda_1+\lda_2+\cdots+\lda_s$;
\item $()$ is the partition with no parts.
\end{itemize}

\textbf{Acknowledgements.} This work is a part of my D.Phil. thesis. 
I am grateful to my supervisor, Marcus du Sautoy, for suggesting the problem and 
 his continuous support and to my thesis examiners, Dan Segal and Gerhard R\"ohrle, 
for spotting numerous mistakes and misprints. The remaining errors are, of course, my responsibility.

\section{The Lazard correspondence and types} 
\label{set-up}  

Elementary properties of PORC functions are summarised in the following lemma.

\begin{lem}\label{PORClem} Let $f(x)$, $g(x)$ and $h(x)$ be functions defined on a subset $D$ of integers. Then
\begin{enumerate}[(i)]
\item\label{sumporc} 
if $f$ and $g$ are PORC on $D$, then so are $f+g$ and $fg$;
\item \label{quot} if $f(x)$ and $g(x)$ are polynomial (PORC) on $D$, $h(x)=f(x)/g(x)$ for all $x\in D$ and $h(x)$ takes only integral values, then $h(x)$ is polynomial (respectively, PORC) on $D$;
\item\label{finporc}  if $D$ is the set of all prime numbers, $f$ is PORC on $D$ and $g(p)$ differs from $f(p)$ only for finitely many primes $p$, then $g$ is PORC on $D$.
\end{enumerate}
\end{lem}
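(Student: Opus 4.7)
For part (i), I would pass to a common modulus: let $N$ be any common multiple of the moduli witnessing the PORC property for $f$ and $g$. On each set $\{j+lN : l \in \mZ\} \cap D$ both $f$ and $g$ agree with polynomials in $\mQ[x]$, and $\mQ[x]$ is closed under addition and multiplication, so $f+g$ and $fg$ are polynomial on each such set and hence PORC with modulus $N$.

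For part (ii), which I expect to be the main obstacle, I would reduce the PORC case to the polynomial case by restricting to each residue class of a common modulus, and handle the polynomial case as follows. The finite-$D$ case is vacuous, so assume $D$ is infinite, and pick $P,Q \in \mQ[x]$ with $P|_D = f$ and $Q|_D = g$. Polynomial division in $\mQ[x]$ gives $P = QR + S$ with $\deg S < \deg Q$, so, away from the finitely many zeros of $Q$,
$$h(x) = R(x) + \frac{S(x)}{Q(x)} \qquad (x \in D),$$
with $S(x)/Q(x) \to 0$ as $|x| \to \infty$. Since $h(x) \in \mZ$, the fractional part of $R(x)$ on $D$ coincides with $-S(x)/Q(x) \bmod 1$, which tends to $0$. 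Writing the coefficients of $R$ over a common denominator $M$ shows that $n \mapsto R(n) \bmod 1$ is periodic in $n \in \mZ$ with period dividing $M$. Choosing a residue class mod $M$ that meets $D$ infinitely often, $R \bmod 1$ is constant on this class; combined with the limit above, the constant must be $0$, so $R(n) \in \mZ$ throughout that class. For large $|n|$ in the class we then have $S(n)/Q(n) = h(n) - R(n) \in \mZ$ while $|S(n)/Q(n)| < 1$, forcing $S(n) = 0$ infinitely often, so $S \equiv 0$ and $h = R$ is polynomial on $D$.

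For part (iii), let $\{p_1, \ldots, p_k\}$ be the finite set of primes on which $g$ and $f$ disagree, and let $N$ witness the PORC property of $f$. Set $N' := N p_1 \cdots p_k$ and intersect each residue class modulo $N'$ with the set of primes. If the class contains some $p_i$, then $p_i \mid N'$ forces every other integer in the class to be divisible by $p_i$, hence not prime, so the class meets the primes only in the singleton $\{p_i\}$ on which every function is polynomial; otherwise the class avoids $\{p_1, \ldots, p_k\}$, so $g = f$ there, and since the class is contained in a residue class modulo $N$ on which $f$ is polynomial, so is $g$. Therefore $g$ is PORC on the primes with modulus $N'$.
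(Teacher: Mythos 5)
Your proof is correct. Parts (i) and (iii) follow essentially the paper's own route: for (iii) the paper takes exactly your modulus $N'=N\prod_{p\in E}p$, leaving implicit the divisibility observation you spell out (a class mod $N'$ containing a prime $p\in E$ consists of multiples of $p$, hence meets the primes only in the singleton $\{p\}$). The genuine difference is in (ii): the paper disposes of it by citation, declaring the polynomial case well known and referring the PORC case to Higman's Lemma 1.1.4, whereas you supply a self-contained argument --- reduce PORC to polynomial by passing to a common modulus, then divide $P=QR+S$ with $\deg S<\deg Q$, use periodicity of $n\mapsto R(n)\bmod 1$ with period dividing the common denominator $M$ of the coefficients of $R$, and combine it with $S(n)/Q(n)\to 0$ to force the constant fractional part to be $0$ on a class meeting $D$ infinitely often, whence $S$ has infinitely many zeros, $S\equiv 0$, and $h=R$ on all of $D$. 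I checked this limiting argument and it is sound (an infinite set of integers is unbounded, and the exact equality of the fractional parts rules out any nonzero constant); what it buys is an elementary, reference-free proof of the one part the paper merely imports. Two cosmetic corrections: the finite-$D$ case of (ii) is not vacuous but trivial, since any rational-valued function on a finite set of integers is polynomial on it by Lagrange interpolation; and because $h=f/g$ is assumed defined on all of $D$, the polynomial $Q$ has no zeros in $D$, so the caveat ``away from the finitely many zeros of $Q$'' can simply be dropped.
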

\begin{proof} \eqref{sumporc} is straightforward. The polynomial case of \eqref{quot}
is well known; the `PORC' case is~\cite[Lemma 1.1.4]{Higman}. For 
\eqref{finporc}, let $E$ be a finite subset of $D$ such that $f$ and $g$ coincide on $D\setminus E$. Since $f$ is PORC on $D$, there exists $N\in \mN$ such 
that $f$ is polynomial on the intersection of $D$ with each residue class 
modulo $N$. Let $N'=N\prod_{p\in E} p$. Then $g$ is polynomial on the intersection of $D$ with each residue class modulo $N'$.  
\end{proof} 
  
For $p\ne 2$, there is a well-known one-to-one (Lazard) correspondence between finite $p$-groups of nilpotency class at most $2$ and finite Lie algebras over $\mZ_{p}$ of the same class. Namely, if $G$ is such a group, the corresponding Lie algebra $L$ has the same elements of $G$ as a set; addition in $L$ is defined as 
$$ 
x+y=xy[y,x]^{1/2}
$$
(if $z\in G$, $z^{1/2}:=z^{(p^k+1)/2}$ where $p^k$ is the order of $z$),
and the Lie bracket of $L$ is simply the commutator operation on $G$. (A similar correspondence may also be defined for groups and Lie rings of higher classes via the famous Campbell-Baker-Hausdorff formula.) The set $Z(G)$ is equal to the set $Z(L)$, and the Frattini subgroup $[G,G]G^p$ of $G$ coincides, as a set, with the ideal $pL+[L,L]$ of $L$.  
Therefore, Theorem \ref{PORCthm} is implied by the following result. 

\begin{thm} \label{Lie} For a fixed integer $n>0$, the number of isomorphism classes of Lie algebras $L$ of order $p^n$ satisfying $pL+[L,L]\subseteq Z(L)$ is a PORC function of the prime $p$.
\end{thm}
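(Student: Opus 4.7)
My plan is to parametrise the Lie algebras of Theorem~\ref{Lie} by linear-algebraic data and then count orbits. For such an $L$, write $Z=Z(L)$ and $V=L/Z$. The hypothesis $pL+[L,L]\subseteq Z$ forces $V$ to be an $\mF_p$-vector space, of some dimension $d$, while $Z$ is a finite $\mZ_p$-module whose isomorphism type is recorded by a partition $\mu$ with $|\mu|=n-d$. The Lie bracket descends to an alternating map $\omega\colon\bigwedge^{\!2}V\to Z$, whose radical $\{v\in V:\omega(v,\cdot)=0\}$ must vanish in order to have $Z(L)=Z$. The underlying extension $0\to Z\to L\to V\to 0$ of $\mZ_p$-modules is classified by $\epsilon\in\Ext^1_{\mZ_p}(V,Z)$. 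Conversely, any quadruple $(Z,V,\omega,\epsilon)$ with $\omega$ non-degenerate reassembles into an admissible $L$, and $L\cong L'$ if and only if the data lie in a single orbit of $G_\mu:=\Aut(Z_\mu)\times \GL(V)$ acting on the product of the alternating-form space with $\Ext^1_{\mZ_p}(V,Z_\mu)$.

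Since only finitely many $\mu$ with $|\mu|\le n$ contribute, the task reduces to showing that for each such $\mu$ the count $\gamma(G_\mu,X_\mu)$ is a PORC function of $p$, where $X_\mu$ denotes the set of admissible non-degenerate pairs. The non-degeneracy of $\omega$ can be removed by stratifying pairs by the radical of $\omega$ (a subspace of $V$) and summing the contributions; this reduces the problem to orbit counts on \emph{unrestricted} pairs $(\omega,\epsilon)$. Burnside then rewrites each such orbit count as $|G_\mu|^{-1}\sum_{g\in G_\mu}|X^g|$, and I would prove the right-hand side is PORC in $p$ using Lemma~\ref{PORClem}.

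Three PORC inputs are then needed. First, conjugacy classes of $G_\mu$ should group into finitely many ``types'' (defined in Sections~\ref{set-up}--\ref{types}) with polynomially many classes per type and polynomial centraliser orders; here Hall polynomials (Section~\ref{Hallpoly}) and the computation of $|\Aut(Z_\mu)|$ from Section~\ref{autsection} come in. Second, for each type of $g\in G_\mu$, the number of $g$-fixed alternating maps $\omega\colon\bigwedge^{\!2}V\to Z_\mu$ should be polynomial in $p$; this is precisely what the generalisation of Higman's \cite[Theorem~1.2.1]{Higman} in Section~\ref{higthm} is designed to provide. Third, the number of $g$-fixed extension classes $\epsilon\in\Ext^1_{\mZ_p}(V,Z_\mu)$ should be polynomial; this follows from the concrete $G_\mu$-module description of $\Ext^1$ developed in Section~\ref{Extsection}. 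Multiplying these through and summing over the finitely many types and partitions finishes the proof.

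The chief obstacle is the second input. Higman's original theorem handles only $\mu=(1,1,\dots,1)$ (elementary abelian centre), where $Z_\mu$ is an $\mF_p$-vector space, $\Aut(Z_\mu)=\GL_{|\mu|}(\mF_p)$, and $\omega$ is an honest $\mF_p$-valued alternating form. For general $\mu$, $Z_\mu$ is a genuine non-semisimple $\mZ_p$-module, $\Aut(Z_\mu)$ has a considerably richer conjugacy structure (involving partition-valued invariants from the theory of finite modules over a DVR), and $\omega$ takes values in this module rather than in a vector space. The notion of ``type'' of a $G_\mu$-element controlling the fixed-point count of $\omega$ must therefore be refined enough to remain uniform in $p$. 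Once this refinement is in place, the remaining ingredients (Hall polynomials, automorphism counts over DVRs, and $\Ext^1$ over PIDs) fit together fairly mechanically to yield Theorem~\ref{Lie}.
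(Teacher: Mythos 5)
Your overall plan matches the paper's proof of Theorem~\ref{Lie} in its main outline: decompose by the isomorphism type of $Z(L)$, identify the data of a central extension with an element of $\Hom(\bigwedge^2 V, Z)\oplus\Ext^1_{\mZ_p}(V,Z)$, enforce the centre condition by a stratification/inclusion--exclusion argument, and then reduce the orbit count to a PORC statement via Higman's orbit-counting machine together with Hall polynomials and a description of $\Aut(Z)$. The stratification by the radical of $\omega$ needs a little more care than you give it (the orbit count on the stratum with radical exactly a fixed subspace $U$ is over a parabolic stabiliser, so one gets a recursion rather than a simple sum), and the paper handles this by building flags $W_1\subset\cdots\subset W_{l-1}\subseteq Z(E)$ into the definition of $F_{m,\mathbf d,\lambda}(p)$ and running a reverse induction on the flag length; but that is essentially your idea made precise.

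The genuine gap is exactly where you flag it: you call the non-semisimple target the ``chief obstacle'' and propose to fix it by refining the notion of type for elements of $\Aut(Z)$, but you do not say how, and it is not clear this can be done directly --- $\Aut(M_\lambda)$ is not a product of general linear groups, so the type/PORC-family machinery of Sections~\ref{set-up}--\ref{higthm} does not apply to it as written. The paper sidesteps the obstacle rather than refining the type notion: since $\bigwedge^2 V$ and $V^*$ are elementary abelian, one has $\Hom(\bigwedge^2 V, M_\lambda)\cong\Hom(\bigwedge^2 V, M_\lambda[p])$ and $V^*\otimes M_\lambda\cong V^*\otimes(M_\lambda/pM_\lambda)$ (see~\eqref{Lnat2}), so both $\omega$ and $\epsilon$ actually live in $\mF_p$-vector spaces, and the action of $\Aut(M_\lambda)$ on them factors through the natural map $\beta\colon\Aut(M_\lambda)\to\GL(M_\lambda/pM_\lambda)\times\GL(M_\lambda[p])$. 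The real content of Section~\ref{autsection} is not the computation of $|\Aut(M_\lambda)|$ (that is Proposition~\ref{auto} in Section~\ref{Hallpoly}) but the proof that $\im\beta$ is a uniform family of subgroups of $\GL_{s,s}(p)$ (Proposition~\ref{uniB}). Combined with the analogous statement for the parabolic acting on $V$ (Propositions~\ref{flag}, \ref{Hall:gen} and Lemma~\ref{subproj}) and the product Lemma~\ref{uniprod}, this makes the relevant acting group a uniform family, and then Theorem~\ref{Hig} is invoked as a black box in Lemma~\ref{key} rather than re-derived by a fresh Burnside argument. Without the reduction to $M_\lambda[p]$ and $M_\lambda/pM_\lambda$ and the uniform-family statement for $\im\beta$, your Burnside step has no mechanism for controlling the fixed-point counts uniformly in $p$, so that step of your proposal does not yet go through.
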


The remainder of this paper is devoted to a proof of Theorem \ref{Lie}. We begin with the necessary definitions, mostly following those of \cite{Higman}.

Let $\mbf n=(n_1,\ldots,n_r)$ be a tuple of nonnegative integers. Let $\GL_{\mbf n}(K)$ be the direct product of the groups $\GL_{n_i} (K)$, $i=1,\ldots, r$. Let $\Gamma$ be an algebraic morphism from $\GL_{\mbf n}(\mQ)$ 
into $\GL_m(\mQ)$. That is, if $\mathbf{g}=(g_1,\ldots,g_r)$ and $\mathbf{h}=(h_1,\ldots,h_r)$ are elements of $\GL_{\mbf n}(\rat)$, then
\begin{enumerate}[(i)]
\item $\Gamma(\mathbf{g})\Gamma(\mathbf{h})=\Gamma(\mathbf{gh})$, and 
\item  the entries of $\Gamma(\mathbf{g})$ are (fixed) rational functions of the entries of $g_1,\ldots,g_r$.
\end{enumerate}
It follows that  
$$\Gamma(\mathbf{g})=\frac{\Gamma_0(\mathbf{g})}{(\det g_1)^{s_1}\dots (\det g_r)^{s_r}},$$ 
where $\Gamma_0$ is a morphism whose elements are polynomials of those of $\mathbf{g}$, and $s_1,\dots,s_r$ are suitable nonnegative integers 
(see \cite[Section 1.2]{Higman}). 
The polynomials defining the elements of $\Gamma_0(\mathbf{g})$ have rational coefficients which can be brought over a common denominator $d$, say. If $K$ is a field whose characteristic does not divide $d$, these coefficients can be interpreted as elements of $K$, and hence $\Gamma$ as a homomorphism of $\GL_{n_1,\dots,n_r}(K)$ into $\GL_m (K)$. We denote the image of this homomorphism by $\Gamma(K)$. We shall also use the symbol $\Gamma(K)$ to denote the corresponding group of linear transformations of a vector space of dimension $m$ over $K$. The collection of linear groups $\Gamma(K)$ obtained in this way is called an \emph{algebraic family of groups}.

Let $k\ge 0$ be an integer, and let $K$ be a field. Let $V$ be a $k$-dimensional vector space over $K$. If $\lda=(\lda_1,\ldots,\lda_s)$ is a partition and $f\in K[X]$, define the $K[X]$-module $M_{\lda}(f)$ by 
$$M_{\lda}(f) = \frac{K[X]}{(f^{\lda_1})} \oplus 
\frac{K[X]}{(f^{\lda_2})}\oplus \cdots
\oplus \frac{K[X]}{(f^{\lda_s})}.
$$
An element $g$ of $\GL(V)$ defines a $K[X]$-module structure on $V$ in the usual way:
$$X\cdot v = gv \quad \forall v\in V.$$
 Since $K[X]$ is a principal ideal domain, this module is isomorphic to a module of the form
$$
M_{\lambda^1}(f_1)\oplus M_{\lambda^2}(f_2)\oplus \dots \oplus M_{\lambda^s}(f_s)
$$
where $\lambda^i$ are non-empty partitions and $f_i\in K[X]$ are distinct irreducible polynomials. We shall say that the polynomials $f_1,\ldots,f_s$ \emph{appear} 
in $g$. We now define the notion of a type, which encodes the combinatorial data of this decomposition that 
do not depend on the field $K$, namely the partitions $\lda^1,\ldots,\lda^s$ and the degrees of the polynomials $f_i$. 

Let $\mbf n=(n_1,\ldots, n_r)$ be a tuple of positive integers. 
An \emph{$\mbf n$-pretype} is a triple $\Cal P=(Q, \mbf d, \bs\lda)$ such that
\begin{enumerate}[(i)]
\item $Q$ is a finite (indexing) set;
\item $\mbf d$ is a tuple $(d_j)_{j\in Q}$ of positive integers;
\item 
$\bs\lda=(\lda^{ij})_{i\in [1,r],j\in Q}$, where each $\lda^{ij}$ is a partition; 
\item for all $i\in [1,r]$, $\sum_{j\in Q} d_j |\lda^{ij}|=n_i$.
\item for each $j\in Q$, there exists $i\in [1,r]$ such that $\lda^{ij}\ne ()$.
\end{enumerate}

An \emph{isomorphism} between two $\mbf n$-pretypes $(Q,\mbf d, \bs\lda)$ and $(Q',\mbf d',\bs\mu)$ is a bijection $f:Q\ra Q'$ such that
$d'_{f(j)}=d_j$ and $\mu^{i,f(j)}=\lda^{ij}$ for all $j\in Q$, $i\in [1,r]$. 
The \emph{automorphism group} $\Aut(\Cal P)$ of a pretype $\Cal P$ is the group of all isomorphisms from $\Cal P$ onto itself. 

An \emph{$\mbf n$-type} is an isomorphism class of $\mbf n$-pretypes. If 
$(Q,\mbf d,\bs\lda)$ is a pretype, we shall denote the corresponding type by
$[Q,\mbf d,\bs\lda]$.  

Now let $\mbf g=(g_1,\ldots,g_r)\in \GL_{\mbf n}(K)$. Then there 
exist polynomials $f_1,\ldots,f_s \in K[X]$ and a tuple of partitions 
$\bs\lda=(\lda^{ij})_{i\in [1,r], j\in [1,s]}$ such that
\begin{enumerate}[(i)]
\item for every $i\in [1,r]$, the $K[X]$-module given by $g_i$ is isomorphic to
$$M_{\lda^{i1}}(f_1)\oplus M_{\lda^{i2}}(f_2) \oplus \cdots 
\oplus M_{\lda^{is}}(f_s);
$$ 
\item\label{ne} for every $j\in [1,s]$, there exists $i\in [1,r]$ such that 
$\lda^{ij}\ne ()$.    
\end{enumerate}
(Note that condition (\ref{ne}) ensures that each polynomial $f_1,\ldots,f_s$ appears in at least one of $g_1,\ldots,g_r$.) 
Let $\mbf d = (d_1,\ldots,d_s)=(\deg f_1,\ldots,\deg f_s)$. Then 
$\Cal T:=[[1,s],\mbf d,\bs\lda]$ is an $\mbf n$-type, uniquely determined by 
$\mbf g$. We shall refer to it as the \emph{type} of $\mbf g$.

Any two conjugate elements of $\GL_{\mbf n}(K)$ are of the same type. If $C$ is a conjugacy class of the group $\GL_{\mbf n}(K)$, let the \emph{type} of $C$ be the type $\Cal T$ of an arbitrary element of $C$. We shall say that $C$ is a 
\emph{class of type} $\Cal T$.

\begin{remark} The above definition of a type is slightly different from that given by G.~Higman~\cite{Higman} (we shall refer to a type
 as defined in~\cite{Higman} as an \emph{H-type}). The two notions are 
equivalent when $\mbf n=(n)$ for some $n\in \mN$. However, the H-type of an element $\mbf g=(g_1,\ldots,g_r)\in\GL_{\mbf n}(q)$ is determined by the types of 
$g_1,\ldots,g_r$. This is not generally the case for the type of $\mbf g$.  
\end{remark}

Suppose that for each prime (or for each prime power) $q$ a subgroup 
$H=H(q)$ of $\GL_{\mbf n}(q)$ is specified. 
We shall call $H$ a \emph{uniform} (respectively, \emph{PORC}) 
\emph{family of subgroups} if for any
 $\mbf n$-type $\mathcal{T}$, for every class $C$ of this type, 
$|C\cap H(q)|$ depends only on $\mathcal{T}$ and $q$ (but not on $C$) and, for a fixed $\mathcal{T}$, is a polynomial (respectively, PORC) function of $q$.

\section{Properties of types}\label{types}

Let $\Cal P=(Q,\mbf d,\bs \lda)$ be an $\mbf n$-pretype, where 
$\mbf n=(n_1,\ldots,n_r)$ as before. 
Let $\Cal T$ be the corresponding type.
Say that a map $F:Q\ra \mF_q[X]$ is a \emph{realisation} of $\Cal P$ over $q$
 if $F$ is injective, $F(j)$ is irreducible and $\deg F(j)=d_j$ 
for all $j\in Q$. 
Let 
$\Cal R_{\Cal P} (q)$ be the set of all realisations of $\Cal P$ over $q$. Every such realisation $F$ gives rise to a conjugacy class $C(F)=C_{\Cal P}(F)$ 
of type $\Cal T$ 
as follows: $C(F)$ consists of all the tuples 
$\mbf g\in \GL_{\mbf n}(q)$ such that, for each 
$j\in [1,r]$, $g_i$ gives rise to a module isomorphic to 
$$
\bigoplus_{j\in Q} M_{\lda^{ij}}(F(j)).
$$   
Clearly, each conjugacy class of type $\Cal T:=[Q,\mbf d,\bs\lda]$ is of the form $C(F)$ for some 
$F\in \Cal R_{\Cal P}(q)$. 
\begin{lem}\label{real} Let $\Cal P=(Q,\mbf d,\bs\lda)$ be an $\mbf n$-pretype.
Let $C$ be a conjugacy class in $\GL_{\mbf n}(q)$ of type $[Q,\mbf d,\bs\lda]$. Then 
$$
|\{F\in \Cal R_{\Cal P}(q) : C(F)=C \} | = |\Aut(\Cal P)|.
$$
\end{lem}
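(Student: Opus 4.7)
My plan is to exhibit a free and transitive action of $\Aut(\mathcal{P})$ on the fibre $\mathcal{F} := \{F \in \mathcal{R}_{\mathcal{P}}(q) : C(F) = C\}$; the lemma then follows from $|\mathcal{F}| = |\Aut(\mathcal{P})|$.

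I define an action of $\Aut(\mathcal{P})$ on $\mathcal{R}_{\mathcal{P}}(q)$ by $\sigma \cdot F := F \circ \sigma^{-1}$. Since $\sigma$ preserves $\mathbf{d}$, the map $F \circ \sigma^{-1}$ is indeed a realisation. Reindexing $k = \sigma^{-1}(j)$ and using $\lambda^{i,\sigma(k)} = \lambda^{ik}$, one has
$$
\bigoplus_{j \in Q} M_{\lambda^{ij}}\!\bigl((F \circ \sigma^{-1})(j)\bigr) \;=\; \bigoplus_{k \in Q} M_{\lambda^{i,\sigma(k)}}(F(k)) \;=\; \bigoplus_{k \in Q} M_{\lambda^{ik}}(F(k))
$$
for every $i$, so $C(F \circ \sigma^{-1}) = C(F)$; thus the action restricts to $\mathcal{F}$. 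Freeness is immediate from injectivity of $F$, since $F \circ \sigma^{-1} = F$ forces $\sigma^{-1} = \id_Q$.

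The main step is transitivity on $\mathcal{F}$. Given $F, F' \in \mathcal{F}$, I appeal to the uniqueness of the primary decomposition of modules over the PID $\mF_q[X]$. Fix $i \in [1,r]$: both $\bigoplus_{j} M_{\lambda^{ij}}(F(j))$ and $\bigoplus_{j'} M_{\lambda^{ij'}}(F'(j'))$ describe the same $\mF_q[X]$-module, arising from the $i$th component of any $\mbf g \in C$. Hence for each irreducible $f \in \mF_q[X]$ that occurs in either decomposition, the partitions $\lambda^{ij}$ (with $F(j) = f$) and $\lambda^{ij'}$ (with $F'(j') = f$) must agree. Condition (v) of the pretype definition ensures that every element of $F(Q)$ appears in at least one such decomposition (and likewise for $F'(Q)$), forcing $F(Q) = F'(Q)$ as subsets of $\mF_q[X]$. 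This is the one place where condition (v) is essential: without it, $F$ could contain a ``dummy'' polynomial that does not occur in any $\mbf g$-decomposition and so has no forced counterpart in $F'$. Once the images coincide, $\sigma := F'^{-1} \circ F$ is a well-defined bijection of $Q$ satisfying $\lambda^{i,\sigma(j)} = \lambda^{ij}$ and $d_{\sigma(j)} = d_j$, hence $\sigma \in \Aut(\mathcal{P})$ and $F' = F \circ \sigma^{-1}$, as required.
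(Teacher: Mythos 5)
Your proof is correct and uses essentially the same idea as the paper: realisations $F, F'$ satisfy $C(F) = C(F')$ precisely when they differ by precomposition with an automorphism of $\mathcal{P}$, whence the fibre has size $|\Aut(\mathcal{P})|$. The paper states this characterisation tersely and declares "the result follows"; you package the same fact as a free and transitive action of $\Aut(\mathcal{P})$ on the fibre, spell out the transitivity step via uniqueness of primary decomposition over $\mathbb{F}_q[X]$, and helpfully flag that condition (v) of the pretype definition is exactly what forces $\im F = \im F'$ — a point the paper leaves implicit. No substantive difference, just a more explicit write-up of the same argument.
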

\begin{proof} Let $F$ and $F'$ be realisations of $\Cal P$ over $\mF_q$. Then 
$C(F)=C(F')$ if and only if, for each $j\in Q$, there exists $k\in Q$ such that
$F(j)=F'(k)$ (so $d_j=d_k$) and, for all $i\in [1,r]$, 
$\lda^{ij}=\lda^{ik}$. This occurs if and only if $\im F'=\im F$ and 
$(F')^{-1}\circ F$ is an automorphism of $\Cal P$. The result follows. \end{proof} 

\begin{lem}\label{numcl} Fix an $\mbf n$-type $\Cal T$. The number of conjugacy classes 
$C$ in $\GL_{\mbf n}(q)$ of type $\Cal T$ is polynomial in $q$. 
\end{lem}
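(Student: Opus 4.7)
The plan is to use Lemma \ref{real} to reduce counting conjugacy classes to counting realisations. Fix a representative $\Cal P = (Q,\mbf d,\bs\lda)$ of the type $\Cal T$. Every conjugacy class of type $\Cal T$ in $\GL_{\mbf n}(q)$ has the form $C(F)$ for some $F\in\Cal R_{\Cal P}(q)$, and Lemma \ref{real} says that each such class corresponds to exactly $|\Aut(\Cal P)|$ realisations. Since $|\Aut(\Cal P)|$ is a constant independent of $q$, it is enough to show that $|\Cal R_{\Cal P}(q)|$ is a polynomial in $q$.

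For this I would partition $Q$ according to the values of $d_j$. Write $Q_d=\{j\in Q:d_j=d\}$ and $m_d=|Q_d|$; only finitely many $m_d$ are nonzero. Since monic irreducible polynomials of distinct degrees are automatically distinct, giving an $F\in\Cal R_{\Cal P}(q)$ amounts to choosing, independently for each $d$, an injection from $Q_d$ into the set of monic irreducible polynomials of degree $d$ in $\mF_q[X]$. Writing $I_d(q)$ for the number of such polynomials, we obtain the closed form
$$
|\Cal R_{\Cal P}(q)| \;=\; \prod_d I_d(q)\bigl(I_d(q)-1\bigr)\cdots\bigl(I_d(q)-m_d+1\bigr).
$$

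The Möbius inversion formula $I_d(q) = \tfrac{1}{d}\sum_{e\mid d}\mu(d/e)q^e$ exhibits $I_d(q)$ as a polynomial in $q$, and hence so is the finite product above. Dividing by the constant $|\Aut(\Cal P)|$ gives a polynomial in $q$ that takes integer values at every prime power, as required.

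I do not anticipate a serious obstacle: the whole purpose of separating the pretype $\Cal P$ from the field-dependent data (the irreducible factors) via Lemma \ref{real} is precisely to expose this polynomiality. The one mild point to verify is that the falling-factorial formula holds on the nose for every $q$, including small $q$ where some $I_d(q)<m_d$; but in that case both sides vanish (the product contains a zero factor, and no injection from $Q_d$ into a strictly smaller set exists), so the identity holds uniformly in $q$.
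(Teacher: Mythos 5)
Your proof is correct and takes essentially the same route as the paper: reduce to counting realisations via Lemma~\ref{real} (dividing by the constant $|\Aut(\Cal P)|$), then observe that the realisation count is polynomial because the number of monic irreducibles of each fixed degree is polynomial in $q$. The paper leaves the falling-factorial expression and the M\"obius-inversion formula implicit as ``well-known,'' so you have simply spelled out the details (including the harmless edge case of small $q$).
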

\begin{proof} By Lemma~\ref{real}, it is enough to show that the number of realisations of a pretype corresponding to $\Cal T$ is polynomial. However, this follows from the well-known fact that the number of irreducible polynomials of a fixed degree over $\mF_q$ is polynomial in $q$.
\end{proof}

Let $\Cal T=[Q,\mbf d,\bs\lda]$ be an $\mbf n$-type. 
Let $l\in [1,r]$, and let 
$\mbf m=(n_1,\ldots,n_l)$. The \emph{projection} of 
$\Cal T$ on the first $l$ components is the $\mbf m$-type $[R,\mbf e,\bs\mu]$ with
\begin{enumerate}[(i)]
\item $R=\{ j\in Q: \exists i\le l \quad \lda^{ij}\ne () \}$;
\item $e_j=d_j$ for all $j\in R$; and
\item $\mu^{ij}=\lda^{ij}$ for all $j\in R$, $i\in [1,l]$.
\end{enumerate}
Let $\pi:\GL_{\mbf n}(q)\ra \GL_{\mbf m}(q)$ be the usual projection onto the first $l$ components.
Observe that if $\mbf g\in \GL_{\mbf n}(q)$ is of type $\Cal T$, then 
$\pi(\mbf g)$ is of type $[R,\mbf e,\bs\mu]$.

\begin{lem}\label{typeproj} Let $\Cal T'=[Q,\mbf d,\bs \lda']$ be an $\mbf n$-type. Suppose that the projection of $\Cal T'$ onto the first $l$ components is of the form  
$\Cal T=[Q,\mbf d,\bs \lda]$ (so the two types have the same indexing set). Let $C$ be a class in $\GL_{\mbf m}(q)$ of type $\Cal T$. For a fixed $\Cal T'$,
the number of classes $C'$ of type $\Cal T'$ in $\GL_{\mbf n}(q)$ such that
$\pi(C')=C$ does not depend on $C$ or $q$.         
\end{lem}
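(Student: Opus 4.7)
The plan is to reduce to counting realisations and then invoke Lemma~\ref{real} twice. Let $\Cal P=(Q,\mbf d,\bs\lda)$ and $\Cal P'=(Q,\mbf d,\bs\lda')$ be pretypes representing $\Cal T$ and $\Cal T'$. Because the projection of $\Cal T'$ onto the first $l$ components has the same indexing set $Q$ and the same degree tuple $\mbf d$ as $\Cal T$, the sets of realisations coincide: $\Cal R_{\Cal P}(q)=\Cal R_{\Cal P'}(q)$. So a single injective map $F\colon Q\to\mF_q[X]$ simultaneously determines a class $C_{\Cal P'}(F)$ in $\GL_{\mbf n}(q)$ of type $\Cal T'$ and a class $C_{\Cal P}(F)$ in $\GL_{\mbf m}(q)$ of type $\Cal T$.

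The key observation is that $\pi$ intertwines these two assignments: if $\mbf g\in C_{\Cal P'}(F)$, then for each $i\in[1,l]$ the $K[X]$-module attached to $g_i$ is $\bigoplus_{j\in Q}M_{\lda'^{ij}}(F(j))=\bigoplus_{j\in Q}M_{\lda^{ij}}(F(j))$, since $\lda'^{ij}=\lda^{ij}$ for $i\le l$. Hence $\pi(C_{\Cal P'}(F))=C_{\Cal P}(F)$. Consequently, the classes $C'$ of type $\Cal T'$ with $\pi(C')=C$ are precisely those of the form $C_{\Cal P'}(F)$ for some realisation $F$ with $C_{\Cal P}(F)=C$.

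Now I would apply Lemma~\ref{real} on both sides. The number of realisations $F$ with $C_{\Cal P}(F)=C$ equals $|\Aut(\Cal P)|$, while for each class $C'$ of type $\Cal T'$ there are exactly $|\Aut(\Cal P')|$ realisations $F$ with $C_{\Cal P'}(F)=C'$. Dividing, the number of classes $C'$ of type $\Cal T'$ with $\pi(C')=C$ equals
\[
\frac{|\Aut(\Cal P)|}{|\Aut(\Cal P')|}.
\]
For this quotient to make sense (and to justify the division step) I need $\Aut(\Cal P')\le \Aut(\Cal P)$: an automorphism of $\Cal P'$ is a bijection of $Q$ preserving $\mbf d$ and every $\lda'^{ij}$ ($i\in[1,r]$), which a fortiori preserves $\lda^{ij}=\lda'^{ij}$ for $i\in[1,l]$, so such a bijection is an automorphism of $\Cal P$.

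The main (and only) subtlety is this verification that the two automorphism groups share the same underlying bijections and one is contained in the other, so that the counting is truly a group-index computation; once this is in place, independence of $C$ and $q$ is immediate because the ratio $|\Aut(\Cal P)|/|\Aut(\Cal P')|$ depends only on the combinatorial data of $\Cal T$ and $\Cal T'$.
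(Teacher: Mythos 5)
Your argument is correct and is essentially the paper's own proof: identify the realisation sets of the two pretypes, observe $\pi(C_{\Cal P'}(F))=C_{\Cal P}(F)$, and apply Lemma~\ref{real} twice to obtain the count $|\Aut(\Cal P)|/|\Aut(\Cal P')|$, which depends only on the combinatorial data. Your extra verification that $\Aut(\Cal P')\le\Aut(\Cal P)$ is true but not actually needed: the division is already justified because the realisations $F$ with $C_{\Cal P}(F)=C$ are partitioned into fibres of size exactly $|\Aut(\Cal P')|$, one fibre for each class $C'$ of type $\Cal T'$ with $\pi(C')=C$.
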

\begin{proof} 
Let $\Cal P'=(Q,\mbf d,\bs\lda')$ and $\Cal P=(Q,\mbf d,\bs\lda)$ be 
corresponding pretypes. Clearly, $\Cal R_{\Cal P}(q)=\Cal R_{\Cal P'}(q)$. 
Moreover, if $F$ is a realisation of $\Cal P'$, then 
$\pi(C_{\Cal P'}(F))=C_{\Cal P}(F)$. By Lemma \ref{real}, the 
number of realisations $F$ such that $C_{\Cal P}(F)=C$ is $|\Aut(\Cal P)|$. 
By Lemma \ref{real} again, each class 
$C'$ of type $\Cal T'$ such that $\pi(C')=C$ is induced by 
$|\Aut(\Cal P')|$ of those realisations. It follows that
 the number of such classes $C'$
is $|\Aut(\Cal P)|/|\Aut(\Cal P')|$. 
This, clearly, does not depend on $C$ or $q$. 
\end{proof}

\begin{lem}\label{subproj} Let $H=H(q)\subseteq \GL_{\mbf n} (q)$ be a 
uniform family of subgroups. Assume that $\pi|_H:H\ra \pi(H)$ is injective 
for all $q$. Suppose also that, for any $\mbf g=(g_1,\ldots,g_n)\in H$, any 
polynomial appearing in $g_i$ for any $i>l$ also appears in $g_j$ for 
some $j\le l$. Then $\pi(H)\subseteq \GL_{\mbf m}(q)$ is a uniform family
of subgroups.
\end{lem}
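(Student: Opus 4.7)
The plan is to count $|\pi(H(q)) \cap D|$ for a conjugacy class $D$ in $\GL_{\mbf m}(q)$ of a given type $\Cal T$ by pulling back through $\pi$, expressing the count as a finite sum over conjugacy classes of $\GL_{\mbf n}(q)$ that map into $D$, and then invoking the uniformity of $H$ together with Lemma~\ref{typeproj}.

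Since $\pi|_{H(q)}$ is injective,
$$ |\pi(H(q)) \cap D| \;=\; |H(q) \cap \pi^{-1}(D)| \;=\; \sum_{C'} |H(q) \cap C'|, $$
where $C'$ runs over the conjugacy classes of $\GL_{\mbf n}(q)$ with $\pi(C') \subseteq D$. Note that $\pi^{-1}(D)$ is stable under $\GL_{\mbf n}(q)$-conjugation (because $\pi$ is a homomorphism to $\GL_{\mbf m}(q)$), so it is indeed a disjoint union of such $C'$, and $\pi(C')$ is always contained in a single $\GL_{\mbf m}(q)$-conjugacy class.

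Next, I claim that only finitely many types $\Cal T' = [Q, \mbf d, \bs\lda']$ can contribute, and they are exactly those whose projection onto the first $l$ components equals $\Cal T$ \emph{with the same indexing set $Q$}. The support hypothesis, applied to any $\mbf g \in C' \cap H(q)$, says that every irreducible polynomial appearing in a component $g_i$ with $i>l$ also appears in some $g_j$ with $j\le l$; this is precisely the statement that the set $R$ in the definition of the projection at the start of Section~\ref{types} coincides with $Q$, so the projection and $\Cal T$ share the same indexing set. The dimension constraints $\sum_{j\in Q} d_j |\lda'^{ij}| = n_i$ for $i>l$ then leave only finitely many admissible $\Cal T'$ extending a given $\Cal T$.

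For each such $\Cal T'$, Lemma~\ref{typeproj} provides a constant $N(\Cal T',\Cal T)$, independent of $D$ and $q$, equal to the number of classes $C'$ of type $\Cal T'$ with $\pi(C')=D$, while the uniformity of $H$ guarantees that $|C'\cap H(q)|$ depends only on $\Cal T'$ and $q$ and is polynomial in $q$. Combining,
$$ |\pi(H(q))\cap D| \;=\; \sum_{\Cal T'} N(\Cal T',\Cal T)\,|C'\cap H(q)|, $$
which is polynomial in $q$ and depends only on $\Cal T$ and $q$, giving that $\pi(H)$ is a uniform family. The one step deserving care is the indexing-set claim—essentially a translation of the support hypothesis into the type language—but once that is in hand the rest reduces to assembling Lemma~\ref{typeproj} and the uniformity hypothesis.
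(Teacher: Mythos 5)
Your proof is correct and follows essentially the same route as the paper: pull back through the injective $\pi|_H$, decompose $\pi^{-1}(D)$ into classes $C'$ organised by their $\mbf n$-types $\Cal T'$, use the support hypothesis to restrict to types with the same indexing set as $\Cal T$, and then invoke Lemma~\ref{typeproj} together with the uniformity of $H$. The one minor imprecision is your phrase ``exactly those'' when identifying the contributing types — the support hypothesis only shows that a type with a different indexing set contributes zero, not that every type with the same indexing set actually contributes — but since the zero terms do no harm the argument is unaffected.
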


\begin{proof} Let $\Cal T=[Q,\mbf d,\bs \mu]$ be an $\mbf m$-type. 
Let $C\subseteq \GL_{\mbf m}(q)$ be a class of type $\Cal T$. 
Since $\pi|_H$ is injective,
$|\pi(H)\cap C|=|H \cap \pi^{-1}(C)|$. Now $\pi^{-1}(C)$ is a union of
 conjugacy classes $C'$ of $\GL_{\mbf n}(q)$. If $\Cal T'$ is 
an $\mbf n$-type, let $D_{\Cal T'}(C)$ be the set of all classes 
$C'\subseteq \GL_{\mbf n}(q)$ of type $\Cal T'$ such that $\pi(C')=C$. 
Then
\begin{equation}\label{uniB1}
|\pi(H)\cap C| = |H\cap\pi^{-1}(C)| =  
\sum_{\Cal T'} \sum_{C'\in D_{\Cal T'}(C)} |C'\cap H|
\end{equation} 
where the first sum is over all $\mbf n$-types $\Cal T'$ whose projection onto the first $l$ components is $\Cal T$. Since $H$ is a uniform family, 
for a fixed $\Cal T'$, $|C'\cap H|$ does not 
depend on the choice of $C'\in D_{\Cal T'}(C)$ and is a 
polynomial function of $q$. 
Moreover, it follows from the hypothesis that
$|C'\cap H|=0$ (where $C'\in D_{\Cal T'}(C)$) 
unless $\Cal T'$ is of the form $[Q,\mbf d, \bs \mu']$ (for some $\bs \mu'$), 
i.e. $\Cal T'$ has the same indexing set as $\Cal T$. 
Therefore, \eqref{uniB1} yields
\begin{equation}\label{uniB2}
|\pi(H)\cap C| = \sum_{\Cal T'} |D_{\Cal T'}(C)| |C' \cap H|
\end{equation} 
where the sum is over all $\mbf n$-types $\Cal T'$ of the form 
$[Q,\mbf d,\bs \mu']$ (for some $\bs\mu'$) whose projection on the first $l$
components is $\Cal T$ ($C'\in D_{\Cal T'}(C)$ is chosen arbitrarily). 
By Lemma \ref{typeproj}, $|D_{\Cal T'}(C)|$ depends only on $\Cal T'$, 
but not on $C$ or $q$. Since $H$ is a uniform family of subgroups, for a fixed $\Cal T$, $|\pi(H)\cap C|$ does not 
depend on $C$ and is polynomial in $q$. \end{proof}

\begin{lem}\label{uniprod} Let $\mbf n$ and $\mbf k$ be 
tuples of positive integers.
Suppose that $H_1=H_1(q)$ and $H_2=H_2 (q)$ are
uniform families of subgroups in $\GL_{\mbf n}(q)$ and $\GL_{\mbf k}(q)$ respectively. Then $H_1\times H_2$ is a uniform family in 
$\GL_{\mbf n, \mbf k} (q)$. 
\end{lem}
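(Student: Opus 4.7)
The plan is to exploit the fact that $\GL_{\mbf n,\mbf k}(q)=\GL_{\mbf n}(q)\times\GL_{\mbf k}(q)$ is a direct product. Every conjugacy class $C$ of a direct product splits as $C=C_1\times C_2$ where $C_i$ is a conjugacy class in the $i$-th factor, and $H_1\times H_2$ is itself a product, so the count reduces at once to
$$|C\cap(H_1\times H_2)|=|C_1\cap H_1|\cdot|C_2\cap H_2|.$$
The entire content of the lemma is therefore to show that the two factors on the right are each controlled by the single $(\mbf n,\mbf k)$-type of $C$.

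The key observation is that if $\Cal T$ is the type of $C$, then the $\mbf n$-type $\Cal T_1$ of $C_1$ and the $\mbf k$-type $\Cal T_2$ of $C_2$ are determined by $\Cal T$: writing $\mbf n=(n_1,\ldots,n_r)$, $\mbf k=(k_1,\ldots,k_s)$, the type $\Cal T_1$ is exactly the projection of $\Cal T$ onto the first $r$ components, and $\Cal T_2$ is the analogous projection onto the last $s$ components (the construction being symmetric between ``first'' and ``last''). This is a direct unpacking of definitions: if $\Cal T=[Q,\mbf d,\bs\lda]$ and $(\mbf g_1,\mbf g_2)\in C$, then the polynomials appearing in $\mbf g_1$ are exactly those indexed by $Q_1=\{\,j\in Q:\exists i\le r,\;\lda^{ij}\ne()\,\}$, with partitions $\lda^{ij}$ for $i\in[1,r]$, which is precisely the data of the projected type.

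Combining these observations, the uniformity hypothesis on $H_1$ gives that $|C_1\cap H_1|$ depends only on $\Cal T_1$ and $q$ and is a polynomial in $q$, and similarly for $|C_2\cap H_2|$. Hence $|C\cap(H_1\times H_2)|$ depends only on $\Cal T$ and $q$ and is polynomial in $q$, which is the definition of a uniform family. I do not foresee a real obstacle; the only mildly subtle point is bookkeeping around the possibility that the indexing set $Q$ of $\Cal T$ identifies a polynomial that appears in both $\mbf g_1$ and $\mbf g_2$, but the projection construction from Section \ref{types} handles this automatically by restricting $Q$ to $Q_1$ (respectively $Q_2$), so no separate argument is required.
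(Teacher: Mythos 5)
Your proposal is correct and follows essentially the same route as the paper's proof: decompose the conjugacy class of $\GL_{\mbf n,\mbf k}(q)$ as $C=C_1\times C_2$, note that the types of $C_1$ and $C_2$ are determined by the $(\mbf n,\mbf k)$-type of $C$ (via the projection construction of Section~\ref{types}), and multiply $|C_1\cap H_1|\,|C_2\cap H_2|$. Your extra remarks on the indexing set $Q$ just spell out the step the paper states as ``the types of $C_1$ and $C_2$ are uniquely determined by $\Cal T$.''
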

\begin{proof} Let $\Cal T$ be an $(\mbf n,\mbf k)$-type. 
Let $C\subseteq \GL_{\mbf n,\mbf k}(q)$ be a class of type
$\Cal T$. Let $C_1$ and $C_2$ be the projections of $C$ into $\GL_{\mbf n}(q)$
and $\GL_{\mbf k}(q)$. The types of $C_1$ and $C_2$ are uniquely determined
by $\Cal T$. Since
$$
|(H_1 \times H_2) \cap C| = |H_1 \cap C_1||H_2\cap C_2|,
$$
the result follows.
\end{proof}

\section{A theorem of Higman}\label{higthm} 

Suppose $H$ is a PORC family of subgroups.
An algebraic family of groups given by a morphism 
$\Gamma:\GL_{\mbf n}(\mQ)\ra \GL_m (\mQ)$ defines, 
for each prime power $q$, an action of $H(q)$ on the vector space 
$\mF_q^m$ over the field $\mF_q$. We are interested in the behaviour of $\gamma(H(q),\mF_q^m)$, the number of 
orbits of this action, as $q$ varies.
         
\begin{thm} \label{Hig} Let $\Gamma$ be an algebraic family of groups, mapping $\GL_{\mbf n}(K)$ into $\GL_{m}(K)$. Let $H(q)$ be a 
PORC family of subgroups of $\GL_{\mbf n}(q)$, where $q$ runs through all primes or all prime powers. Then there exists a finite set $E$ of primes such that
 $\gamma(H(q),\mF_q^m)$ is PORC function on the set of all primes (respectively, prime powers) $q$ that are not powers of primes from $E$. 
\end{thm}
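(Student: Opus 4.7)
The strategy is to apply Burnside's Lemma, partition the resulting sum by $\mbf n$-type, and reduce the proof to an auxiliary PORC statement that generalises \cite[Theorem~1.2.1]{Higman} type by type. Let $d$ be a common denominator for the rational coefficients of the polynomials defining $\Gamma_0$, and let $E$ initially consist of the primes dividing $d$; then $\Gamma:\GL_{\mbf n}(\mF_q)\ra\GL_m(\mF_q)$ is a well-defined group homomorphism for every prime power $q$ not a power of a prime in $E$. I will enlarge $E$ by finitely many further primes as the argument proceeds, which is harmless by Lemma~\ref{PORClem}\eqref{finporc}.

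Because $\Gamma$ is a homomorphism, it sends $\GL_{\mbf n}(q)$-conjugate tuples to $\GL_m(q)$-conjugate elements, so $|\Fix_{\mF_q^m}(\Gamma(\mbf h))|$ is constant on each $\GL_{\mbf n}(q)$-conjugacy class. Burnside's Lemma therefore yields
$$
|H(q)|\cdot\gamma(H(q),\mF_q^m) \;=\; \sum_{C} |C\cap H(q)|\cdot|\Fix(\Gamma(\mbf h_C))|,
$$
with $C$ running over conjugacy classes of $\GL_{\mbf n}(q)$ and $\mbf h_C\in C$ chosen arbitrarily. Grouping by the type $\Cal T$ of $C$ and invoking the PORC-family hypothesis (so $|C\cap H(q)|=:\alpha_{\Cal T}(q)$ is PORC and depends only on $\Cal T$) gives
$$
|H(q)|\cdot\gamma(H(q),\mF_q^m) \;=\; \sum_{\Cal T} \alpha_{\Cal T}(q)\, T_{\Cal T}(q), \qquad T_{\Cal T}(q):=\sum_{C\text{ of type }\Cal T}|\Fix(\Gamma(\mbf h_C))|.
$$
Only finitely many $\mbf n$-types $\Cal T$ occur, since the partition sizes $|\lda^{ij}|$ are bounded by the entries of $\mbf n$; hence, by Lemma~\ref{PORClem}\eqref{sumporc}, it suffices to prove $T_{\Cal T}(q)$ is PORC for each fixed $\Cal T$, and then to handle the denominator $|H(q)|$.

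The main obstacle is the PORC property of $T_{\Cal T}(q)$. Using Lemma~\ref{real} to parameterise the classes of type $\Cal T$ by realisations, one has
$$
|\Aut(\Cal P)|\cdot T_{\Cal T}(q) \;=\; \sum_{F\in \Cal R_{\Cal P}(q)} q^{e(F,q)}, \qquad e(F,q):=\dim_{\mF_q}\ker\bigl(\Gamma(\mbf h_{C(F)})-I\bigr).
$$
Although $\mbf h_{C(F)}$ depends on $F$, the $\ol{\mF_q}$-conjugacy class of $\mbf h_{C(F)}$ --- and hence of $\Gamma(\mbf h_{C(F)})$ --- is determined by the multiset of roots of the polynomials $F(j)$ (with multiplicities prescribed by $\bs\lda$), so $e(F,q)$ only depends on this root data. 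Following the strategy of Higman~\cite[Theorem~1.2.1]{Higman}, I will analyse the Jordan form of $\Gamma(\mbf h_{C(F)})$ as a constructible function of these roots: this stratifies $\Cal R_{\Cal P}(q)$ by polynomial equalities and inequalities into finitely many pieces on each of which $e(F,q)$ is constant once the residue class of $q$ modulo a suitable integer $N$ is fixed. Standard counts of irreducible polynomials over $\mF_q$ then show that the $\mF_q$-point count of each stratum is polynomial in $q$ on each residue class, so each contribution to $T_{\Cal T}(q)$ is PORC, possibly after absorbing finitely many primes (arising from discriminants of the defining equations) into $E$.

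Finally, $|H(q)|=\sum_{\Cal T}\alpha_{\Cal T}(q)\,\beta_{\Cal T}(q)$ where $\beta_{\Cal T}(q)$ is the number of conjugacy classes of $\GL_{\mbf n}(q)$ of type $\Cal T$ (polynomial in $q$ by Lemma~\ref{numcl}), so $|H(q)|$ is PORC. Dividing the PORC numerator in the displayed identity by the PORC denominator $|H(q)|$ and invoking Lemma~\ref{PORClem}\eqref{quot} --- legitimate since $\gamma(H(q),\mF_q^m)$ is always a (non-negative) integer --- yields the theorem.
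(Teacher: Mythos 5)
Your overall skeleton --- Burnside's lemma, grouping conjugacy classes by $\mbf n$-type, invoking the PORC hypothesis for $|C\cap H(q)|$, Lemma~\ref{numcl} for the class counts, absorbing finitely many bad primes into $E$, and finishing with Lemma~\ref{PORClem}\eqref{quot} --- is the same as the paper's. The gap is in your middle paragraph, which is where all the real work lives. Subsets of affine space over $\mF_q$ cut out by ``polynomial equalities and inequalities'' do not in general have point counts that are polynomial, or even PORC, in $q$ (think of counting points on a curve of positive genus), so no ``standard count of irreducible polynomials'' can show that the strata you describe have PORC cardinalities; as stated, this step would fail, and with it the claim that each $T_{\Cal T}(q)$ is PORC.

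What rescues the argument, and what the paper (following Higman) actually uses, is much more specific structure: for a fixed type $\Cal T$, and for $q$ a power of a prime outside a fixed finite set, the eigenvalues of $\Gamma(\mbf g)$ over $\ol{\mF}_q$, with multiplicities, are fixed \emph{monomials} in the eigenvalues of $\mbf g$ (\cite[Lemma 3.1]{Higman}). Consequently the condition ``$\Gamma(\mbf g)$ has eigenvalue $1$ with multiplicity $t$'' is a Boolean combination of monomial equations and inequations among the roots of the polynomials $F(j)$, and the number of realisations (equivalently, via Lemma~\ref{real}, the number of classes of type $\Cal T$) satisfying such a system is PORC by Higman's theorem on simultaneous monomial equations \cite[Theorem 2.2.2]{Higman}. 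That multiplicative theorem is the source of the modulus $N$ and of the exceptional primes, and it is a genuinely nontrivial input that cannot be replaced by a generic constructible-stratification argument. If you replace your stratification claim by these two results of Higman, your proof becomes the paper's.
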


\begin{proof}[Sketch proof] This is a generalisation of~\cite[Theorem 1.2.1]{Higman}, 
which is (almost) equivalent to the present theorem in the case when 
$H(q)=\GL_{\mbf n}(q)$. The proof is essentially the same. 
The only necessary changes to the proof of~\cite{Higman} are as follows:
\begin{enumerate}[(i)]
\item \cite[Lemma 3.2]{Higman} should be replaced by the hypothesis that $H(q)$ is a PORC family of subgroups; 
\item Minor adjustments are needed on account of the fact that the notion of a type is different from that of an H-type. (The proof works in the same way, but the result is more general if the notion of a type defined above is used.)
\end{enumerate}

A reader who understands the proof of~\cite{Higman} will have no difficulty in making these changes.
For a detailed translation of Higman's proof, 
see~\cite[Appendix A]{thesis}. 

We sketch the idea of the proof briefly.
The well-known orbit-counting formula (see e.g.~\cite{Burnside}) yields the 
expression 
$$
\gamma(H(q),\mF_q^m)= |H(q)|^{-1}\sum_{\mbf g\in H(q)} \Fix(\Gamma(\mbf g)) 
$$
where $\Fix(\Gamma(\mbf g))$ is the number of points of $\mF_q^m$ fixed by 
$\Gamma(\mbf g)$. 
By the hypothesis and Lemma~\ref{numcl}, $|H(q)|$ is a PORC function of $q$. 
Hence, by Lemma \ref{PORClem}, it is enough to show that 
$$
 \sum_{g\in H(q)} \Fix(\Gamma(\mbf g)) 
$$
is PORC. Of course, $\Fix(\Gamma(\mbf g))=q^t$ where $t$ is the multiplicity of the eigenvalue $1$ in 
$\Gamma(\mbf g)$. Lemma 3.1 of~\cite{Higman} shows that, if the type $\Cal T$ 
of $\mbf g$ is fixed, the eigenvalues of
$\Gamma(\mbf g)$ over the algebraic closure $\ol{\mF}_q$ of $\mF_q$, with multiplicity, can be expressed as (fixed) monomials of the eigenvalues of $\mbf g$ over $\ol{\mF}_q$ as long as $q$ is a power of a prime outside a fixed finite set $E$. A result of Higman  
(\cite[Theorem 2.2.2]{Higman}) shows that the number of solutions of a system of simultaneous 
`monomial' equations and inequalities over $\mF_{q^k}$ (where $k\in \mN$) is a PORC function of $q$.
Using these two results, one can deduce that, for a fixed $t\ge 0$, the number of ways to choose a conjugacy class $C$ of type $\Cal T$ so that, if $\mbf g\in C$, $\Gamma(\mbf g)$ has eigenvalue $1$ with multiplicity $t$, is a PORC function of $q$. It remains to note that, by the hypothesis, if $C$ is such a conjugacy class, then
$|C\cap H(q)|$ is a PORC function of $q$, and to sum over all possible types $\Cal T$.
\end{proof}

\begin{remark} If $q$ runs through all primes, then by Lemma~\ref{PORClem}
 \eqref{finporc} we may assume that $E=\varnothing$.
\end{remark}

\section{Hall polynomials}\label{Hallpoly}

Let $\fr{o}$ be a (commutative) discrete valuation ring, and let $\fr{p}$ be its maximal ideal. Assume that $\fr{o}/\fr{p}$ is a finite field isomorphic to 
$\mF_q$. For background on discrete valuation rings, see 
\cite[Chapter 9, \S 1]{AM}. Typical examples of such rings include 
$\mF_q [[X]]$ and $\mZ_p$. 

For each partition $\lda=(\lda_1,\ldots,\lda_s)$, there is a corresponding $\fr o$-module  
$$
M_{\lda}=M_{\fr o,\lda}:=\frac{\fr o}{\fr p^{\lda_1}} \oplus 
\frac{\fr o}{\fr p^{\lda_2}} \oplus \cdots \oplus 
\frac{\fr o}{\fr p^{\lda_s}}.  
$$
Since $\fr o$ is a principal ideal domain, every finite $\fr o$-module 
is of the form $M_{\lda}$ for some partition $\lda$.
We make use the following well-known results. 
\begin{thm}\label{Hall}\emph{\cite[II.4]{Macdonald}}  Let $\lambda,\mu,\nu$ be (fixed) partitions. Then the number of submodules $N$ of $M_{\lda}$ such that 
$N \simeq M_{\mu}$ and $M_{\lda}/N \simeq M_{\nu}$ depends only on $q$ (but not on $\fr{o}$) and is a polynomial function of $q$.
\end{thm}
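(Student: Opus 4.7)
\emph{Proof plan.} The statement is the classical Hall polynomial theorem (see Macdonald, II.4), and my plan is to establish its two assertions separately: first the independence of the count from $\fr o$ given $q$, then its polynomiality in $q$.

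For the independence step I would induct on $|\lda|$. Write $M = M_{\fr o,\lda}$ and $\bar M = M/\fr p M$, an $\mF_q$-vector space of known dimension (the number of nonzero parts of $\lda$). Any submodule $N\subseteq M$ is determined by the pair $(\bar N, N\cap \fr p M)$, where $\bar N$ is the image of $N$ in $\bar M$ and $N\cap \fr p M\subseteq \fr p M$, subject to a gluing compatibility. Now $\fr p M\cong M_{\fr o,\lda^-}$, where $\lda^-$ is obtained from $\lda$ by subtracting $1$ from each positive part, and $|\lda^-|<|\lda|$ unless $\lda$ is empty. Summing over the possible isomorphism types of $\bar N$ and $N\cap\fr p M$, one obtains a recursion expressing the count in $M_{\fr o,\lda}$ in terms of analogous Hall counts inside $M_{\fr o,\lda^-}$ together with counts of $\mF_q$-subspaces of $\bar M$ of prescribed dimension (Gaussian binomials). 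By induction, every ingredient depends only on $q$, so the same holds for the total.

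Having reduced to $q$-only dependence, one may take $\fr o=\mF_q[[X]]$ for the polynomiality step. Here I would stratify the set of submodules $N$ of type $\mu$ in $M_\lda$ by the isomorphism types of the iterated images $\fr p^i N$, or equivalently by the partitions describing $N\cap \fr p^i M$. Each stratum is then a configuration of $\mF_q$-subspaces inside fixed finite-dimensional vector spaces, subject to compatibility under the residual action of $X$ on $\fr p^i M / \fr p^{i+1}M$. Such configurations are counted by polynomial expressions in $q$ built out of Gaussian binomials $\binom{n}{k}_q$; in particular each stratum contributes a polynomial in $q$, and summing finitely many of them yields a polynomial in $q$ as required.

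The main obstacle is bookkeeping: keeping the recursion in the first step uniform enough that the inputs really are Hall counts for smaller partitions (rather than counts for a genuinely different type of ring), and identifying each stratum in the second step with a standard flag-counting problem over $\mF_q$. Once these combinatorial identifications are in place, polynomiality and ring-independence are automatic from the inductive hypothesis and the polynomiality of Gaussian binomials.
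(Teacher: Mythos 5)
The paper does not prove this theorem itself but cites Macdonald~\cite[II.4]{Macdonald}, where the Hall polynomials are constructed by a nontrivial combinatorial analysis (LR-sequences). Your recursive strategy is a plausible alternative in spirit, but it contains a concrete false step: a submodule $N \subseteq M$ is \emph{not} determined by the pair $(\bar N, N \cap \fr p M)$. For a counterexample take $\fr o = \mZ_p$, $M = \mZ/p^2 \oplus \mZ/p^2$, $N_1 = \langle (1,0)\rangle$, $N_2 = \langle (1,p)\rangle$: then $\bar N_1 = \bar N_2 = \mF_p\,(1,0)$ and $N_1 \cap pM = N_2 \cap pM = \langle (p,0)\rangle$, yet $N_1 \neq N_2$. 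The phrase ``subject to a gluing compatibility'' does not repair this: the gluing data is genuinely extra information beyond the pair, and counting it --- and showing the count is a polynomial in $q$ independent of $\fr o$ --- is precisely the content of the theorem. Your recursion silently assumes that the total factors into Hall counts for $\lda^-$ times Gaussian binomials, which is what needs to be proved.

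The second step has similar issues. The claimed equivalence between the isomorphism types of $\fr p^i N$ and the types of $N \cap \fr p^i M$ is false: for $M = \mZ/p^2$ and $N = pM$, one has $pN = 0$ of type $()$, while $N \cap pM = N$ has type $(1)$. More importantly, even with a correct stratification, the assertion that each stratum ``is then a configuration of $\mF_q$-subspaces inside fixed finite-dimensional vector spaces'' counted by Gaussian binomials is the crux, not an automatic consequence; the strata are cut out by nontrivial compatibility conditions across the graded pieces, and showing they contribute polynomially in $q$ requires a precise parametrization of the kind Macdonald supplies via LR-sequences (or, in other treatments, via Hermite/Smith normal forms over $\fr o$). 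You have the right outline --- stratify and count --- but the key counting lemma is left unestablished.
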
 
A straightforward induction yields the following.
\begin{cor}\label{Hall:ind}
Let $\lambda,\mu^1,\mu^2,\ldots,\mu^r$ be partitions. The number of chains 
$$
0=N_{0}\le N_1 \le N_2 \le \cdots \le N_{r} = M_{\fr o, \lda} 
$$
of $\fr o$-submodules of $M_{\lda}$ such that $N_i/N_{i-1}\simeq M_{\mu^i}$ for $i=1,\ldots,r$ 
depends only on $q$ (but not on $\fr o$) and is a polynomial function of $q$.
\end{cor}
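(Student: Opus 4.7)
The plan is to prove the corollary by induction on $r$, with the base case $r=1$ being vacuous (the count is $1$ if $\mu^1=\lda$ and $0$ otherwise) and the case $r=2$ being exactly Theorem \ref{Hall} (applied with $\mu=\mu^1$, $\nu=\mu^2$). For the inductive step from $r-1$ to $r$, I would count chains by first choosing the penultimate term $N_{r-1}$ and then extending.

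More precisely, for partitions $\alpha,\beta,\gamma$ let $a(\alpha,\beta,\gamma)$ denote the number of submodules $N\le M_{\fr o,\alpha}$ with $N\simeq M_\beta$ and $M_{\fr o,\alpha}/N\simeq M_\gamma$; by Theorem \ref{Hall} this depends only on $q$ and is polynomial in $q$. For partitions $\nu,\mu^1,\ldots,\mu^{r-1}$, let $b(\nu;\mu^1,\ldots,\mu^{r-1})$ denote the number of chains $0=N_0\le N_1\le\cdots\le N_{r-1}=M_{\fr o,\nu}$ with $N_i/N_{i-1}\simeq M_{\mu^i}$ for $i\in[1,r-1]$; by the inductive hypothesis this depends only on $q$ and is polynomial in $q$. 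The key observation is that for any fixed submodule $N_{r-1}\le M_{\fr o,\lda}$ with $N_{r-1}\simeq M_{\fr o,\nu}$, the number of chains $0=N_0\le N_1\le\cdots\le N_{r-1}$ in $M_{\fr o,\lda}$ with $N_i/N_{i-1}\simeq M_{\mu^i}$ depends only on the abstract $\fr o$-module structure of $N_{r-1}$, hence equals $b(\nu;\mu^1,\ldots,\mu^{r-1})$. Summing over the choice of $N_{r-1}$, grouped by its isomorphism type, the total count is
$$
\sum_{\nu} a(\lda,\nu,\mu^r)\cdot b(\nu;\mu^1,\ldots,\mu^{r-1}),
$$
where $\nu$ ranges over partitions (equivalently, finite $\fr o$-module isomorphism types). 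This sum is finite because $a(\lda,\nu,\mu^r)=0$ unless $|\nu|+|\mu^r|=|\lda|$, and then only finitely many $\nu$ contribute. A sum of finitely many products of polynomials in $q$ is a polynomial in $q$, and each factor depends only on $q$ (not on $\fr o$), so the result follows.

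I do not foresee a serious obstacle: the only point that requires any care is the observation that $b(\nu;\mu^1,\ldots,\mu^{r-1})$ is intrinsic to the abstract module $M_{\fr o,\nu}$ and therefore can be applied to an arbitrary submodule $N_{r-1}\le M_{\fr o,\lda}$ isomorphic to $M_{\fr o,\nu}$, which is immediate from the fact that the chain-counting condition involves only the subquotients. Everything else is bookkeeping and an application of Theorem \ref{Hall} together with the inductive hypothesis.
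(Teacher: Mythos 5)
Your proof is correct and is exactly the ``straightforward induction'' the paper alludes to but does not spell out: induct on $r$, apply Theorem~\ref{Hall} to choose $N_{r-1}$ (grouped by its isomorphism type), and apply the inductive hypothesis to count chains inside it. The observation that $b(\nu;\mu^1,\ldots,\mu^{r-1})$ depends only on the abstract module structure of $N_{r-1}$ is the right thing to flag, and the argument is complete.
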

\begin{prop}\label{auto}\emph{\cite[II.(1.6)]{Macdonald}} The number of automorphisms of a finite $\fr{o}$-module $M_{\lda}$ of a fixed elementary divisor type $\lambda$ depends only on $q$ (but not on $\fr o$) and is a polynomial function of $q$.
\end{prop}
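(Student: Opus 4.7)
The plan is to derive an explicit formula for $|\Aut(M_\lda)|$ purely in terms of $q$ and $\lda$, from which the proposition is immediate. Fix generators $e_1,\ldots,e_s$ of $M_\lda$ with $\Ann(e_i)=\fr p^{\lda_i}$, ordered so that $\lda_1\ge\cdots\ge\lda_s$. Any $\fr o$-linear endomorphism $\phi$ of $M_\lda$ is given by a matrix $(a_{ij})$ via $\phi(e_j)=\sum_i a_{ij}e_i$, where the relation $\fr p^{\lda_j}\phi(e_j)=0$ forces $a_{ij}\in\fr p^{\max(0,\lda_i-\lda_j)}/\fr p^{\lda_i}$, a set of size $q^{\min(\lda_i,\lda_j)}$.

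The first step is to characterise the automorphisms inside $\End_{\fr o}(M_\lda)$. By Nakayama's lemma and finiteness of $M_\lda$, $\phi$ is an automorphism if and only if its reduction $\bar\phi$ modulo $\fr p$ is an invertible $\mF_q$-linear endomorphism of $M_\lda/\fr p M_\lda$. When $\lda_i>\lda_j$, the entry $a_{ij}$ automatically lies in $\fr p$, so $\bar a_{ij}=0$; when $\lda_i\le\lda_j$, the image $\bar a_{ij}\in\mF_q$ is unconstrained. Grouping the indices by equal values of $\lda$, the matrix $\bar\phi$ is then block lower triangular, the diagonal block corresponding to parts of size $k$ being an arbitrary $m_k\times m_k$ matrix over $\mF_q$ (where $m_k$ is the multiplicity of $k$ in $\lda$). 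Hence $\bar\phi$ is invertible if and only if each such diagonal block lies in $\GL_{m_k}(\mF_q)$.

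The second step is to count. The number of invertible reductions $\bar\phi$ equals $\prod_k |\GL_{m_k}(q)|$ multiplied by $q$ to the number of strictly-below-block free entries, a quantity depending only on $\lda$. Given such an invertible $\bar\phi$, the number of lifts to $\phi\in\End_{\fr o}(M_\lda)$ is another power of $q$ determined by $\lda$ alone, since the entry $a_{ij}$ admits $q^{\min(\lda_i,\lda_j)-1}$ lifts of any prescribed reduction when $\lda_i\le\lda_j$, and $q^{\min(\lda_i,\lda_j)}$ lifts of the forced reduction $0$ when $\lda_i>\lda_j$. Multiplying, one obtains an explicit expression for $|\Aut(M_\lda)|$ as a product of factors of the shape $q^a$ and $q^a-q^b$, with all exponents depending only on $\lda$; in particular it is a polynomial in $q$ independent of $\fr o$, as required.

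The only real obstacle is bookkeeping: consolidating the various exponents into a tidy closed expression (Macdonald's formula does this via the conjugate partition $\lda'$), but for the present statement it suffices to observe the \emph{shape} of the formula, which is manifest from the argument above.
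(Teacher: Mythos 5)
Your argument is correct, and it is essentially the same derivation that underlies the cited result: Macdonald's proof in \cite[II.(1.6)]{Macdonald} likewise describes $\End_{\fr o}(M_\lda)$ by the entry constraints $a_{ij}\in\fr p^{\max(0,\lda_i-\lda_j)}/\fr p^{\lda_i}$, detects automorphisms by reducing modulo $\fr p$ via Nakayama, and counts the invertible block-triangular reductions and their lifts, arriving at the closed formula $a_\lda(q)=q^{|\lda|+2n(\lda)}\prod_{i}\varphi_{m_i}(q^{-1})$. The paper itself does not reprove the proposition but simply cites it, so there is nothing in the paper's text to diverge from; your write-up correctly fills in the standard proof, and the only thing you omit (by choice) is the consolidation of the exponents into the tidy formula via the conjugate partition, which is not needed for the polynomiality claim.

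One tiny presentational remark: when you say ``$\fr p^{\lda_j}\phi(e_j)=0$ forces $a_{ij}\in\fr p^{\max(0,\lda_i-\lda_j)}/\fr p^{\lda_i}$,'' it is worth spelling out that you are already viewing $a_{ij}$ as an element of $\fr o/\fr p^{\lda_i}$ (since $e_i$ has annihilator $\fr p^{\lda_i}$), and the constraint comes from requiring $\fr p^{\lda_j}a_{ij}=0$ there. You have the content; stating this explicitly removes any ambiguity about where the $a_{ij}$ live before the constraint is imposed.
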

The polynomials appearing in Theorem \ref{Hall} are known as \emph{Hall polynomials}. Let $a_{\lda}(q)$ be the polynomials of Proposition \ref{auto}. That is,
$$
a_{\lda} (q)= |\Aut (M_{\fr o,\lda})|
$$
where $|\fr o/\fr p|=q$.

We shall apply these results in the following context 
(see \cite[IV.2, page 271]{Macdonald}). 
If $f\in \mF_q [X]$ is an irreducible polynomial, let $\mF_q [X]_{(f)}$ be the localisation of 
$\mF_q [X]$ at the prime ideal $(f)$; that is, $\mF_q [X]_{(f)}$ is the ring of fractions $u/v$ where 
$u,v\in \mF_q [X]$ and $v\notin (f)$. Then $\mF_q [X]_{(f)}$ is a discrete valuation ring with maximal ideal $(f)$ and residue field $\mF_q [X]/(f)$ that consists of $q^{\deg f}$ elements.  

\begin{lem}\label{autogen}\emph{\cite[Lemma 3.2]{Higman}} 
Let $n\in \mN$. Let $\Cal T$ be an $(n)$-type.
Then there exists a polynomial $F_{\Cal T}(q)$ such that, for every prime power $q$ and every class $C$ of $\GL_n (q)$ of type $\Cal{T}$, 
$|C|=F_{\Cal T} (q)$.  
\end{lem}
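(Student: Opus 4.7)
The plan is to compute $|C|$ by the orbit-stabilizer theorem and then identify the centraliser with an automorphism group of a torsion module, to which Proposition \ref{auto} applies.

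First, fix $g\in C$ and write $|C|=|\GL_n(q)|/|C_{\GL_n(q)}(g)|$. The factor $|\GL_n(q)|=\prod_{i=0}^{n-1}(q^n-q^i)$ is a (fixed) polynomial in $q$, so by Lemma \ref{PORClem}\eqref{quot} it suffices to show that $|C_{\GL_n(q)}(g)|$ is polynomial in $q$ depending only on $\Cal T$. Now $V:=\mF_q^n$ becomes an $\mF_q[X]$-module via $g$, and $C_{\GL_n(q)}(g)$ is precisely $\Aut_{\mF_q[X]}(V)$: an invertible $\mF_q$-linear map commutes with $g$ if and only if it is $\mF_q[X]$-linear.

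Next, decompose $V$ into $f$-primary components. If $\Cal T$ is represented by the pretype $([1,s],\mbf d,\bs\lda)$ and $g$ corresponds to a realisation $F$ with $f_j:=F(j)$, then $V=\bigoplus_{j=1}^{s} V_j$ with $V_j\simeq M_{\lda^j}(f_j)$, and
$$
\Aut_{\mF_q[X]}(V)=\prod_{j=1}^{s}\Aut_{\mF_q[X]}(V_j),
$$
since homomorphisms between modules annihilated by coprime polynomials vanish. Now the $\mF_q[X]$-module $M_{\lda^j}(f_j)$ is annihilated by a power of $f_j$, so its $\mF_q[X]$-endomorphisms coincide with its endomorphisms over the localisation $\fr o_j:=\mF_q[X]_{(f_j)}$. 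Under this identification, $V_j$ becomes the $\fr o_j$-module $M_{\fr o_j,\lda^j}$, and $\fr o_j$ is a discrete valuation ring with residue field of size $q^{d_j}$.

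Applying Proposition \ref{auto} to each factor gives
$$
|C_{\GL_n(q)}(g)|=\prod_{j=1}^{s}a_{\lda^j}\!\left(q^{d_j}\right),
$$
which is manifestly a polynomial in $q$ depending only on the multiset of pairs $(d_j,\lda^j)$, hence only on $\Cal T$. Dividing the polynomial $|\GL_n(q)|$ by this polynomial (noting that the quotient is an integer for every prime power $q$) and invoking Lemma \ref{PORClem}\eqref{quot} yields the desired polynomial $F_{\Cal T}(q)$. No real obstacle arises; the only point requiring care is the identification of the centraliser with a product of $\fr o_j$-module automorphism groups, which rests on the fact that localisation at $(f_j)$ does not change Hom-sets between $f_j$-primary torsion modules.
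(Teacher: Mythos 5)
Your proof is correct and follows essentially the same route as the paper: orbit--stabiliser, identification of the centraliser with $\Aut_{\mF_q[X]}(V)$, decomposition into $f_j$-primary components, passage to the localisations $\mF_q[X]_{(f_j)}$ so that Proposition~\ref{auto} gives $\prod_j a_{\lda^j}(q^{d_j})$, and finally Lemma~\ref{PORClem}~(\ref{quot}). If anything, you spell out the coprimality and localisation justifications slightly more explicitly than the paper does; no changes are needed.
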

\begin{proof} Let $g$ be an element of a class $C$
of type $\Cal T$ in $G:=\GL_n (q)$. By the orbit-stabiliser formula,
\begin{equation}\label{sizeC}
|C|= \frac{|G|}{|C_G (g)|}.
\end{equation}
However, the elements of $G$ commuting with $g$ are precisely the automorphisms of the $\mF_q [X]$-module given by the action of $g$ on $\mF_q$.
This module is of the form 
$$
M_{\lda^1}(f_1)\oplus\cdots\oplus M_{\lda^r}(f_r),
$$
where $\lda^i$ are partitions and 
$f_i\in \mF_q [X]$ are irreducible polynomials. Since $C$ is of type $\Cal T$,
$$
\Cal T=[[1,r],(\deg f_i)_{i\in [1,r]}, (\lda^i)_{i\in [1,r]}].$$
The group of automorphisms of this module is the direct product of 
$\Aut(M_{\lda^i} (f_i))$, $i=1,\ldots,r$.  
Note that automorphisms of $M_{\lda^i}(f_i)$ as an $\mF_q [X]$-module are the same as its automorphisms as an $\mF_q [X]_{(f_i)}$-module. Hence, by 
Proposition \ref{auto}, 
$$
|\Aut(M_{\lda^i}(f_i))|=a_{\lda_i}(q^{\deg f_i}).
$$
By \eqref{sizeC},
$$
|C|=\frac{|\GL_n (q)|}{\prod_{i=1}^r a_{\lda_i} (q^{\deg f_i})}.
$$
Clearly, this depends only on $\Cal T$ and $q$ (but not on $C$). It is well known that $|\GL_n (q)|$ is polynomial in $q$: in fact, $|\GL_n (q)|=a_{(1^n)}(q)$. Hence, by Lemma \ref{PORClem} (\ref{quot}), $|C|$ is polynomial in $q$.
\end{proof}

Let $V=\mF_q^n$, the standard $n$-dimensional vector space over $\mF_q$. Fix a 
tuple $\mbf d=(d_1,\ldots,d_l)$ of positive integers 
 such that $d_1+\cdots+d_l=n$. Write 
$\mbf a = (n,d_1,\ldots,d_l)$. Let $S$ be the set of all tuples
$\mbf U=(U_1,\ldots,U_l)$ such that
\begin{equation}\label{chain1}
0=U_0\le U_1\le U_2\le \dots\le U_l=V
\end{equation}
is a chain of subspaces (a flag) of $V$ and $\dim(U_i/U_{i-1})=d_i$. The group 
$\GL(V)$ acts on the set $S$.
Recall that $\scr P(V;U_0,U_1,\ldots,U_l)$ is the (parabolic) subgroup of $\GL(V)$ that consists of 
the elements stabilising $\mbf U=(U_1,\ldots,U_l)$.
For each $\mbf U\in S$, there is a natural embedding
$$
\tau=\tau_{\mbf U}:\scr P(V;U_0,U_1,\ldots,U_l) \ra 
\GL(V) \times \prod_{i=1}^{l} \GL(U_i/U_{i-1})
\simeq \GL_{\mbf a}(q).
$$
Let $\Cal J (\mbf U)=\im \tau$. We shall identify 
$$
\GL(V) \times\prod_{i=1}^{l} \GL(U_i/U_{i-1})
$$ 
with $\GL_{\mbf a}(q)$, so $\Cal J (\mbf U)$ may be seen as a subgroup of 
$\GL_{\mbf a}(q)$. Let $\Cal J_{\mbf d}(q)=\Cal J (\mbf U)$ for an 
arbitrarily chosen $\mbf U \in S$. (Thus, $\Cal J_{\mbf d}(q)$ is determined 
only up to conjugacy, but that is all we shall need.) 
Our aim is to show that $\Cal J_{\mbf d}(q)$ is a uniform family of subgroups. 
Let 
$\pi:\GL_{\mbf a}(q) \ra \GL_{n}(q)\simeq \GL(V)$ 
be the natural projection.

\begin{prop} \label{flag} Let $V=\mF_q^n$. 
Fix an $\mbf a$-type $\Cal T$. Let $C$ be a class in $\GL_{\mbf a}(q)$ of type $\Cal T$.
Let $g\in \pi(C) \subseteq \GL(V)$. 
Then the number of flags $\mbf U\in S$ such that $g(\mbf U)=\mbf U$ 
and $\tau_{\mbf U}(g)\in C$ 
depends only on $\mathcal{T}$ and $q$, but not on $C$ or $g$, 
and is a polynomial function of $q$.
\end{prop}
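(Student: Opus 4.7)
The plan is to use the primary decomposition of $V$ as an $\mathbb{F}_q[X]$-module (with respect to the action of $g$) to reduce the count to a product of chain-counts inside finite modules over discrete valuation rings, so that Corollary \ref{Hall:ind} applies directly. Concretely, I would choose a pretype $\mathcal{P} = (Q, \mathbf{d}, \boldsymbol{\lambda})$ representing $\mathcal{T}$ and a realisation $F \in \mathcal{R}_{\mathcal{P}}(q)$ with $C = C_{\mathcal{P}}(F)$, and set $\mathfrak{o}_j = \mathbb{F}_q[X]_{(F(j))}$, a discrete valuation ring with residue field of order $q^{d_j}$. Since $g \in \pi(C)$, the action of $g$ makes $V$ into an $\mathbb{F}_q[X]$-module isomorphic to $\bigoplus_{j \in Q} M_{\lambda^{1j}}(F(j))$, and primary decomposition yields $V = \bigoplus_j V_j$ with $V_j \simeq M_{\mathfrak{o}_j, \lambda^{1j}}$ as an $\mathfrak{o}_j$-module.

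Next I would observe that any $g$-stable subspace $U \le V$ splits as $U = \bigoplus_j (U \cap V_j)$, where each intersection is an $\mathfrak{o}_j$-submodule of $V_j$, and that for $U' \le U$ both $g$-stable the $F(j)$-primary component of $U/U'$ is $(U \cap V_j)/(U' \cap V_j)$. Consequently, a $g$-stable flag $\mathbf{U} = (0 = U_0 \le \cdots \le U_l = V)$ is the same data as a tuple of chains $(0 = U_{0,j} \le \cdots \le U_{l,j} = V_j)_{j \in Q}$ of $\mathfrak{o}_j$-submodules, and the condition $\tau_{\mathbf{U}}(g) \in C$ becomes the requirement that $U_{i,j}/U_{i-1,j} \simeq M_{\mathfrak{o}_j, \lambda^{i+1, j}}$ for every $i \in [1, l]$ and $j \in Q$. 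The count of qualifying flags therefore factors as
$$
\prod_{j \in Q} \#\!\left\{\, 0 = U_{0,j} \le U_{1,j} \le \cdots \le U_{l,j} = V_j \; : \; U_{i,j}/U_{i-1,j} \simeq M_{\mathfrak{o}_j, \lambda^{i+1, j}} \;\; \forall i \,\right\}.
$$

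By Corollary \ref{Hall:ind} each factor is a polynomial function of $|\mathfrak{o}_j/\mathfrak{p}_j| = q^{d_j}$, hence a polynomial function of $q$, and depends only on the partitions $\lambda^{1j}, \lambda^{2j}, \ldots, \lambda^{l+1, j}$ and on $d_j$ --- data read off directly from $\mathcal{T}$. Independence from $g$ then follows because any two elements of $\pi(C)$ are conjugate in $\mathrm{GL}(V)$ and conjugation permutes the qualifying flags; independence from $C$ among classes of the same type follows since a change of realisation $F$ only changes the polynomials $F(j)$ but preserves their degrees and the partitions, and Corollary \ref{Hall:ind} is insensitive to the choice of discrete valuation ring with a given residue field size. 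The main obstacle in the write-up is the equivalence in the second paragraph: verifying carefully that the conjugacy-class condition $\tau_{\mathbf{U}}(g) \in C$ --- which a priori mixes data on $V$ and on the quotients $U_i/U_{i-1}$ --- is cleanly captured by the per-primary-component isomorphism conditions listed above. This reduces to checking that the $F(j)$-primary part of an induced quotient coincides with the induced quotient of the $F(j)$-primary parts, a routine exactness property of primary decomposition for $\mathbb{F}_q[X]$-modules of finite length.
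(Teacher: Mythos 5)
Your proposal is correct and follows essentially the same route as the paper's proof: decompose $V$ into its primary components with respect to $g$, note that $g$-stable subspaces and the class condition $\tau_{\mbf U}(g)\in C$ split componentwise into prescribed subquotient types over the discrete valuation rings $\mF_q[X]_{(F(j))}$, and apply Corollary \ref{Hall:ind} to each factor. The extra care you flag about primary components of quotients is exactly the point the paper handles by observing that the projections onto the $V_j$ are polynomials in $g$.
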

\begin{proof} The map $g\in \GL(V)$ makes $V$ 
an $\mF_q [X]$-module in the usual way. Then 
$V$ can be decomposed as a direct sum of modules:
$$
V=V_1\oplus V_2\oplus\cdots\oplus V_s,
$$
where $V_i\simeq M_{\kappa^i}(f_i)$ ($i\in [1,s]$) for some partitions 
$\kappa^1,\ldots,\kappa^s$ and distinct irreducible polynomials 
$f_1,\ldots,f_s\in \mF_q [X]$. 
By a well-known result of linear algebra, the projection map from $V$ 
onto each $V_i$ is polynomial in $g$. 
Hence, any subspace $W$ of $V$ satisfying $g(W)=W$ decomposes as 
$$
 W=(W\cap V_1) \oplus (W\cap V_2)\oplus \cdots \oplus (W\cap V_s). 
$$
Suppose $\mbf U\in S$ is chosen so that $g(\mbf U)=\mbf U$ and 
$\tau_{\mbf U}(g)\in C$. 
Let $U_{ij}=U_i\cap V_j$ 
($1\le i \le l$, $1\le j \le s$).
Let $\mu^{ij}$ be the partition satisfying 
$U_{ij}/U_{i-1,j} \simeq M_{\mu^{ij}}(f_j)$. 
Since $\tau_{\mbf U}(g)$ is of type $\Cal T$, we conclude that
$$
\Cal T = [[1,s],(\deg f_i)_{i\in [1,s]}, (\lda^{ij})_{i\in [1,l+1], j\in [1,s]} ]
$$ 
where $\lda^{1j}=\kappa^j$ and $\lda^{i+1,j}=\mu^{ij}$ for all $i,j$ (recall that the partitions 
$\lda^{1j}$ correspond to the component $\GL(V)$ and $\lda^{i+1,j}$ 
correspond to 
$\GL(U_i/U_{i-1})$). 

Choosing a flag $\mbf U\in S$ such that $g(U_i)=U_i$ for all $i$ and
 $\tau_{\mbf U}(g)\in C$ amounts to choosing, for each $j\in [1,s]$, a chain
\begin{equation}\label{chain2}
0=U_{0j} \le U_{1j} \le \cdots \le U_{lj} = V_j
\end{equation}
of submodules of the $\mF_q [X]_{(f_j)}$-module $V_j$ satisfying 
$U_{ij}/U_{i-1,j}\simeq M_{\mu^{ij}}$. 
By Corollary \ref{Hall:ind}, there exists a polynomial $R_j$ (depending only on $\kappa^j$ and 
$\mu^{ij}$) such that the number of ways of choosing such a chain is 
$R_j (q^{\deg f_j})$ for all prime powers $q$. 
Hence, the number flags $\mbf U\in S$ 
preserved by $g$ such that $\tau_{\mbf U}(g)\in C$ is 
$$
\prod_{j=1}^s R_j (q^{\deg f_j}).
$$ 
This is a polynomial function of $q$, uniquely determined by $\Cal T$. \end{proof} 

\begin{prop} \label{Hall:gen} $\Cal J_{\mbf d}(q)$ is a uniform family of 
subgroups in $\GL_{\mbf a}(q)$.
\end{prop}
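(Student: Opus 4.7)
The plan is to fix a conjugacy class $C$ of type $\Cal T$ in $\GL_{\mbf a}(q)$, decompose it as $C = C_0 \times C_1 \times \cdots \times C_l$ using the direct product structure $\GL_{\mbf a}(q) = \GL_n(q) \times \prod_{i=1}^{l} \GL_{d_i}(q)$, and compute $|C \cap \Cal J_{\mbf d}(q)|$ by a double count organised around Proposition \ref{flag}. The factor-class types $\Cal T_0, \Cal T_1, \ldots, \Cal T_l$ are determined by $\Cal T$, and by construction $\pi(C) = C_0$.

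First I would observe, using injectivity of $\tau_{\mbf U}$, that $|\Cal J(\mbf U) \cap C|$ equals the number of $g \in \GL(V)$ satisfying $g(\mbf U) = \mbf U$ and $\tau_{\mbf U}(g) \in C$; since $C$ is a product class, such $g$ automatically lie in $C_0$. The key preliminary step is then to verify that this count is independent of $\mbf U \in S$. For any $h \in \GL(V)$, the substitution $g \mapsto hgh^{-1}$ is a bijection of $C_0$, and a direct check, tracking the bases chosen to identify $\GL(U_i/U_{i-1})$ with $\GL_{d_i}(q)$, yields $\tau_{h\mbf U}(hgh^{-1}) = \mbf h\, \tau_{\mbf U}(g)\, \mbf h^{-1}$, where $\mbf h = (h, h_1, \ldots, h_l) \in \GL_{\mbf a}(q)$ is built from $h$ together with the induced change-of-basis matrices $h_i \in \GL_{d_i}(q)$ on the successive quotients. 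Hence $\tau_{\mbf U}(g) \in C$ iff $\tau_{h\mbf U}(hgh^{-1}) \in C$, and the transitivity of $\GL(V)$ on $S$ then gives the claimed invariance. Equivalently, the subgroups $\Cal J(\mbf U)$ for varying $\mbf U \in S$ are pairwise $\GL_{\mbf a}(q)$-conjugate, so $|\Cal J_{\mbf d}(q) \cap C|$ is unambiguously defined.

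Next I would double-count the set of pairs $(g, \mbf U)$ with $g \in C_0$, $\mbf U \in S$, $g(\mbf U) = \mbf U$, and $\tau_{\mbf U}(g) \in C$. Summing over $\mbf U$ first and invoking the previous step, the cardinality equals $|S| \cdot |\Cal J_{\mbf d}(q) \cap C|$. Summing over $g$ first, Proposition \ref{flag} provides a polynomial $P_{\Cal T}(q)$ depending only on $\Cal T$ and $q$ for the inner count, so the cardinality equals $|C_0| \cdot P_{\Cal T}(q)$. Equating the two expressions,
\[
|\Cal J_{\mbf d}(q) \cap C| \;=\; \frac{|C_0| \cdot P_{\Cal T}(q)}{|S|}.
\]

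To finish, $|C_0|$ is polynomial in $q$ and determined by $\Cal T_0$ by Lemma \ref{autogen}; $|S|$ is the $q$-multinomial coefficient counting flags of composition type $\mbf d$ in $\mF_q^n$, clearly polynomial in $q$ and independent of $C$; and $P_{\Cal T}(q)$ is polynomial by Proposition \ref{flag}. Lemma \ref{PORClem}(\ref{quot}) then delivers the uniformity of $\Cal J_{\mbf d}(q)$. I expect the only real obstacle to be the conjugacy/independence claim of the second paragraph, where one must carefully keep track of the basis identifications implicit in the definition of $\tau_{\mbf U}$; everything else is a direct assembly of results already established.
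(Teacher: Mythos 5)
Your proposal is correct and follows essentially the same route as the paper: the same double count of pairs $(g,\mbf U)$ with $g(\mbf U)=\mbf U$ and $\tau_{\mbf U}(g)\in C$, yielding $|\Cal J_{\mbf d}(q)\cap C|=|\pi(C)|\,P_{\Cal T}(q)/|S|$ and then invoking Proposition \ref{flag}, Lemma \ref{autogen}, the polynomiality of $|S|$, and Lemma \ref{PORClem}(\ref{quot}). The only difference is that you spell out the transitivity/conjugation argument (via $\tau_{h\mbf U}(hgh^{-1})=\mbf h\,\tau_{\mbf U}(g)\,\mbf h^{-1}$) that the paper leaves as a one-line remark.
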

\begin{proof}
Fix an $\mbf a$-type $\mathcal{T}$. If $C$ is a class in 
$\GL_{\mbf a}(q)$ of type 
$\Cal T$, let $\alpha (q,C)=|\Cal J_{\mbf d}(q) \cap C|$. 
We are to show that $\alpha(q,C)$ does not depend on $C$ and is a 
polynomial function of $q$.  
Let
$$
T=\{ (\mbf U,g) \in S\times \GL(V): g(\mbf U)=\mbf U \text{ and }
\tau_{\mbf U}(g)\in C \}.
$$ 
We shall count $|T|$ in two ways.   For any $\mbf U\in S$, the number of $g\in \GL(V)$ such that $(\mbf U,g)\in T$ does not depend on the choice of 
$\mbf U$. (This follows from the fact that $\GL(V)$ acts transitively on $S$.) 
Hence,
$$
|T|=\alpha(q,C)|S|.
$$
On the other hand, choose $g\in \pi(C)$, and let $\beta(q,C)$ be the number of flags $\mbf U\in S$ such that $g(\mbf U)=\mbf U$ and $\tau_{\mbf U} (g) \in C$.
 By Proposition \ref{flag}, 
$\beta(q,C)$ does not depend on $C$ and is polynomial in $q$. Thus, $|T|=|\pi(C)|\beta(q,C)$.
It follows that
$$
\alpha(q,C)=\frac{|\pi(C)|\beta(q,C)}{|S|}.
$$ 
$\pi(C)$ is a conjugacy class in $\GL(V)$, and its type is uniquely determined by $\Cal T$. 
Hence, by Lemma \ref{autogen}, $|\pi(C)|$ does not depend on $C$ 
and is polynomial in $q$. It is well known that $|S|$ is polynomial in $q$. 
The result now follows from Lemma \ref{PORClem} (\ref{quot}). \end{proof}

\section{Automorphisms of modules}\label{autsection}

As in the previous section, let $\fr o$ be a discrete valuation ring with a maximal ideal $\fr p$ and a finite residue field. We shall identify $\fr o/\fr p$ with $\mF_q$, where $q=|\fr o/\fr p|$. Let $t$ be a uniformizer of $\fr o$, i.e. an element of $\fr p\setminus\fr p^2$.
Let $\lda=(\lda_1,\ldots,\lda_s)$ be a partition. In this section, 
we investigate the automorphism group of the module $B:=M_{\fr o,\lda}$ 
in some detail. Let
$$
B[\fr p]=B[t]=\{x\in B: tx=0 \}.
$$
Then $B[\fr p]$ is a vector space over $\mF_q$, as is $B/\fr p B$. There is a natural map
$$
\beta: \Aut(B) \ra \GL(B/ \fr p B) \times \GL(B[\fr p]),
$$
defined by the formula 
$$
\beta(h)(x+\fr p B,y)=(hx+ \fr pB, hy),
$$  
where $h\in \Aut(B)$, $x\in B$, $y\in B[\fr p]$.

Our aim is to describe $\im\beta$ and, in particular, to prove that $\im\beta$ is a uniform family of subgroups of 
$\GL(B/\fr pB)\times \GL(B[\fr p])\simeq\GL_{s,s}(q)$. We can represent $\lda$ as 
$$
(\underbrace{\mu_1,\ldots,\mu_1}_{u_1},\ldots,\underbrace{\mu_r,\ldots,\mu_r}
_{u_r})
$$
where $\mu_1>\cdots>\mu_r$. Then
$$
B=\fr o e_{11} \oplus \fr o e_{12} \oplus \cdots \oplus \fr o e_{1,u_1} 
\oplus \cdots \oplus \fr o e_{r1} \oplus \cdots \oplus \fr o e_{r,u_r} 
$$
for some $e_{ij}\in B$ satisfying $\Ann_{\fr o}(e_{ij})=\fr p^{\mu_i}$. If 
$h\in\Aut (B)$, $h$ can be represented by a (not unique) matrix 
$X\in \GL_s (\fr o)$ so that, for any $a_{11},\ldots,a_{r,u_r}\in \fr o$,
\begin{equation}\label{hmat}
h \left(\sum_{i=1}^r \sum_{j=1}^{u_i} a_{ij} e_{ij} \right)= 
(e_1,\ldots,e_{r,u_r}) X 
\begin{pmatrix}
a_{11} \\ \vdots \\ a_{r,u_r} \\
\end{pmatrix}
\end{equation}
It is easy to check that a matrix $X\in \GL_s (\fr o)$ gives rise to an automorphism of $B$ in this way if and only if 
\begin{equation}\label{Xmat}
X= \begin{pmatrix}
X_{11} & t^{\mu_1-\mu_2} X_{12} & \ldots & t^{\mu_1-\mu_r} X_{1r} \\
 X_{21} & X_{22} & \ldots & t^{\mu_2-\mu_r} X_{2r} \\
\vdots & \vdots & \ddots & \vdots \\  
 X_{r1} & X_{r2} & \ldots & X_{rr} \\
\end{pmatrix}
\end{equation}
where each $X_{ij}$ is an arbitrary $u_i\times u_j$ matrix over $\fr o$. 

Clearly, $\{\bar{e}_{11},\ldots,\bar{e}_{r,u_r}\}$, where 
$\bar{e}_{ij}=e_{ij}+\fr p B$, is a basis of $B/\fr p B$. Let 
$f_{ij}=t^{\mu_i-1}e_{ij}$ for all $i,j$. Then 
$\{ f_{11},\ldots,f_{r,u_r} \}$ is a basis of $B[\fr p]$. Using these bases, we shall identify $\GL(B/\fr p B)\times \GL(B[\fr p])$ 
with $\GL_{s,s} (q)$. Thus, an element 
$(Y,Z)\in \GL(B/\fr p B)\times \GL(B[\fr p])$ is given by a pair of 
invertible matrices of the form 
\begin{eqnarray} Y & = & 
\begin{pmatrix}
 Y_{11} & Y_{12} & \ldots & Y_{1r} \\

\vdots & \vdots & \ddots & \vdots \\ 
Y_{r1} & Y_{r2} & \ldots & Y_{rr} \\ 
\end{pmatrix}, \label{Ymat} \\ 
Z & = & \begin{pmatrix}
 Z_{11} & Z_{12} & \ldots & Z_{1r} \\
\vdots & \vdots & \ddots & \vdots \\ 
Z_{r1} & Z_{r2} & \ldots & Z_{rr} \\ 
\end{pmatrix} \label{Zmat}
\end{eqnarray}      
where $Y_{ij}$ and $Z_{ij}$ are $u_i\times u_j$ matrices over $\mF_q$. The next result easily follows from the description of automorphisms of $B$ given in the previous paragraph.
\begin{lem}\label{autim} Let $Y\in \GL(B/\fr p B)$ and $Z \in \GL(B[\fr p])$ be as in 
\eqref{Ymat}, \eqref{Zmat}. Then $(Y,Z)\in \im \beta$ if and only if the following hold:
\begin{enumerate}[(i)]
\item $Y_{ij}=0$ whenever $i<j$,
\item $Z_{ij}=0$ whenever $i>j$, and
\item $Y_{ii}=Z_{ii}$ for all $i\in [1,r]$.
\end{enumerate} 
\end{lem}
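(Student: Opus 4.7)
The plan is to compute $\beta(h)$ explicitly from a matrix $X$ of the form \eqref{Xmat} and read off the block structure, then invert this computation to construct preimages.

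For the forward direction, fix $h \in \Aut(B)$ represented by $X$ as in \eqref{Xmat}. The $(k,l),(i,j)$ entry of $X$ equals $t^{\mu_k-\mu_i} X_{ki}[l,j]$ when $k < i$ (so $\mu_k > \mu_i$) and equals $X_{ki}[l,j]$ when $k \ge i$. To compute $Y$, reduce modulo $\fr p B$: when $k < i$ the scalar $t^{\mu_k - \mu_i}$ is a positive power of the uniformizer, hence its image in $\mF_q$ is zero. Therefore only row-blocks $k \ge i$ contribute to $\overline{h(e_{ij})}$, so the $(k,i)$ block of $Y$ equals $\overline{X_{ki}}$ for $k \ge i$ and equals $0$ for $k < i$. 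Relabelling indices gives condition (i). To compute $Z$, use the basis $f_{kl} = t^{\mu_k - 1} e_{kl}$ and expand $h(f_{ij}) = t^{\mu_i - 1} h(e_{ij})$. For $k > i$ we have $\mu_k \le \mu_i - 1$, hence $t^{\mu_i - 1} e_{kl} = 0$. For $k = i$ we get the term $\overline{X_{ii}[l,j]} f_{il}$. For $k < i$ the factor $t^{\mu_k - \mu_i}$ from $X$ combines with $t^{\mu_i - 1}$ to yield $t^{\mu_k - 1} e_{kl} = f_{kl}$, producing the term $\overline{X_{ki}[l,j]} f_{kl}$. Thus the $(k,i)$ block of $Z$ equals $\overline{X_{ki}}$ for $k \le i$ and vanishes for $k > i$, which gives condition (ii). Comparing diagonal blocks in the two calculations yields $Y_{ii} = \overline{X_{ii}} = Z_{ii}$, which is condition (iii).

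For the converse, suppose $(Y,Z)$ satisfies (i)-(iii). Choose, for each pair $(k,i)$ with $k > i$, an arbitrary lift $X_{ki} \in \M_{u_k \times u_i}(\fr o)$ of $Y_{ki}$; for each pair with $k < i$, a lift $X_{ki}$ of $Z_{ki}$; and for $k=i$, a lift $X_{ii} \in \M_{u_i \times u_i}(\fr o)$ of the common value $Y_{ii}=Z_{ii}$. Assemble $X$ via the template \eqref{Xmat}. Since $\fr o$ is local, $X$ is invertible in $\GL_s(\fr o)$ precisely when its reduction modulo $\fr p$ is invertible; but that reduction is block lower triangular (the super-diagonal $t$-factors kill the corresponding blocks) with diagonal blocks $\overline{X_{ii}} = Y_{ii}$, each invertible because $Y \in \GL(B/\fr p B)$. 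Hence $X$ defines an automorphism $h$ of $B$, and the forward computation then shows $\beta(h) = (Y,Z)$.

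There is no real obstacle beyond careful index bookkeeping; the content is already encoded in \eqref{Xmat}, so the argument is purely a direct unwinding of definitions and an invocation of the local-ring criterion for invertibility in the converse.
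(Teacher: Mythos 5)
Your proof is correct and is precisely the argument the paper intends: the paper itself gives no proof beyond remarking that the lemma ``easily follows from the description of automorphisms of $B$ given in the previous paragraph'' (the matrix template \eqref{Xmat}), and your proposal is the careful unwinding of that remark, including the correct use of the local-ring criterion for invertibility of $X$ in the converse direction.
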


Lemma \ref{autim} describes $\im\beta$ as a subgroup of $\GL_{s,s}(q)$ (up to conjugacy). This description only depends on 
$\mbf u=(u_1,\ldots,u_r)$ and $q$: 
it does not depend on $\fr o$. Let 
$H=H^{\mbf u}(q)=\im\beta \subseteq \GL_{s,s}(q)$. 
Let $\mbf a=(s,u_1,\ldots,u_r)$, $\mbf b=(s,s,u_1,\ldots,u_r)$.
Let $\sigma:H\hra \GL_{\mbf b}(q)$ be the injective homomorphism given by
$$
(Y,Z) \mapsto (Y,Z,Y_{11},Y_{22},\ldots,Y_{rr}),
$$
where $Y_{ii}$ are as in \eqref{Ymat}. Let 
$\pi_1:\GL_{\mbf b}(q)\ra \GL_{\mbf a}(q)$ be the projection onto the components $1,3,4,\ldots,r+2$. Let $\pi_2:\GL_{\mbf b}(q)\ra \GL_{\mbf a}(q)$ be the projection onto the components $2,3,4,\ldots,r+2$. Also, let 
$\pi_0:\GL_{\mbf b}(q)\ra \GL_{\mbf u}(q)$ be the projection onto the components $3,4,\ldots,r+2$. 
\begin{lem}\label{sigH} $\sigma(H)$ is a uniform family of subgroups in 
$\GL_{\mbf b}(q)$.
\end{lem}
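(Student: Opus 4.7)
The plan is to fix a $\mbf b$-type $\Cal T$ and a conjugacy class $C$ of type $\Cal T$ in $\GL_{\mbf b}(q)$, then compute $|C\cap\sigma(H)|$ directly. Since $\GL_{\mbf b}(q)$ is a direct product, $C$ factorises as $C=C_Y\times C_Z\times C_1\times\cdots\times C_r$, where $C_Y,C_Z\subseteq\GL_s(q)$ and $C_i\subseteq\GL_{u_i}(q)$ are individual conjugacy classes whose types are determined by $\Cal T$. From the description of $\sigma(H)$,
\[
|C\cap\sigma(H)|=\sum_{(g_1,\ldots,g_r)\in C_1\times\cdots\times C_r}\#Y(g_1,\ldots,g_r)\cdot\#Z(g_1,\ldots,g_r),
\]
where $\#Y(g_1,\ldots,g_r)$ is the number of $Y\in C_Y$ of the form \eqref{Ymat} with $Y_{ii}=g_i$ for all $i$, and $\#Z$ is defined analogously.

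I would first show that $\#Y$ depends only on the classes $C_Y,C_1,\ldots,C_r$ (not on the specific representatives). Conjugation in $\GL_s(q)$ by the block-diagonal matrix $\mathrm{diag}(h_1,\ldots,h_r)$ preserves lower block triangularity and the class $C_Y$, and acts on diagonal blocks by componentwise conjugation; it therefore provides a bijection between the sets counted for $(g_i)$ and $(h_ig_ih_i^{-1})$. The same holds for $\#Z$. Hence the sum collapses to $|C\cap\sigma(H)|=|C_1|\cdots|C_r|\cdot\#Y\cdot\#Z$, and each $|C_i|$ is polynomial in $q$ depending only on $\Cal T$ by Lemma \ref{autogen}.

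The main computation is to express $\#Y$ as a polynomial in $q$ via Proposition \ref{flag}. Let $\mbf U_0$ be the flag of $V=\mF_q^s$ with jumps of sizes $u_r,u_{r-1},\ldots,u_1$ preserved by lower block triangular matrices, and set $\mbf d=(u_r,\ldots,u_1)$ and $\mbf a_Y=(s,u_r,\ldots,u_1)$ in the notation of Section \ref{Hallpoly}. The condition ``$Y\in C_Y$ is lower block triangular with $Y_{ii}\in C_i$ for all $i$'' translates to ``$Y$ stabilises $\mbf U_0$ and $\tau_{\mbf U_0}(Y)\in C_Y'$'', where $C_Y'=C_Y\times C_r\times\cdots\times C_1$ is an $\mbf a_Y$-conjugacy class whose type is determined by $\Cal T$ (here one uses that $\chi_Y=\prod_i\chi_{g_i}$, so the irreducible factors of $\chi_Y$ are precisely those appearing among the $g_i$). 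I would then double-count
\[
T=\{(Y,\mbf U)\,:\,Y\in C_Y,\ \mbf U\in S,\ Y(\mbf U)=\mbf U,\ \tau_{\mbf U}(Y)\in C_Y'\}.
\]
Summing over $Y$ first yields $|T|=|C_Y|\cdot M_Y$, where $M_Y$ is the polynomial furnished by Proposition \ref{flag}. Summing over $\mbf U$ first and using transitivity of $\GL(V)$ on $S$ yields $|T|=|S|\cdot|C_1|\cdots|C_r|\cdot\#Y$. Equating and invoking Lemma \ref{PORClem}(\ref{quot}) (together with the well-known polynomiality of $|S|$) gives $\#Y=|C_Y|\cdot M_Y/(|S|\cdot|C_1|\cdots|C_r|)$ as a polynomial in $q$ depending only on $\Cal T$. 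The same reasoning applied to the opposite flag with $\mbf d'=(u_1,\ldots,u_r)$ handles $\#Z$, completing the argument.

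The main obstacle is essentially bookkeeping: verifying the conjugation-invariance of $\#Y$, and checking that the $\mbf a_Y$-type of $C_Y'$ is indeed determined by the $\mbf b$-type $\Cal T$. No machinery beyond Proposition \ref{flag}, Lemma \ref{autogen} and Lemma \ref{PORClem} is required.
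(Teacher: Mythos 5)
Your argument is correct and follows essentially the same route as the paper: you factor $|C\cap\sigma(H)|$ over the shared diagonal-block tuple (the paper's $\frac{|\pi_1(C)\cap H_1|\,|\pi_2(C)\cap H_2|}{|\pi_0(C)|}$ is exactly your $|C_1|\cdots|C_r|\cdot\#Y\cdot\#Z$), and then control the two triangular counts via flags and Proposition~\ref{flag}. The only difference is cosmetic: the paper observes that $H_1,H_2$ are conjugate to the parabolic families $\Cal J_{(u_r,\ldots,u_1)}(q)$, $\Cal J_{\mbf u}(q)$ and cites Proposition~\ref{Hall:gen}, whereas you re-derive that proposition's double-counting argument inline.
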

\begin{proof} Let $\Cal T$ be a $\mbf b$-type. Let $C$ be a class in 
$\GL_{\mbf b}(q)$ of type $\Cal T$. Let $H_1=\pi_1(\sigma(H))$, 
$H_2=\pi_2 (\sigma(H))$.
By Lemma \ref{autim}, 
$(Y,Z,T_1,\ldots,T_r)\in \sigma (H)$ if and only if 
$(Y,T_1,\ldots,T_r)\in H_1$ and $(Z,T_1,\ldots,T_r)\in H_2$.
Let $(T_1,\ldots,T_r)\in \pi_0 (C)$. Then there are 
$$
\frac{|\pi_1 (C) \cap H_1|}{|\pi_0 (C)|}
$$
matrices $Y\in \GL_s (q)$ such that 
$(Y,T_1,\ldots,T_r)\in \pi_1(C)\cap H_1$.
This follows from the fact that 
the number of such matrices $Y$ does not depend on the choice of 
$(T_1,\ldots,T_r)$. Similarly, there are 
$$
\frac{|\pi_2 (C) \cap H_2|}{|\pi_0 (C)|}
$$
matrices $Z\in \GL_s (q)$ such that 
$(Z,T_1,\ldots,T_r)\in \pi_2(C)\cap H_2$.
Therefore,
\begin{eqnarray*}
|\sigma(H)\cap C|& = & 
|\pi_0 (C)| \cdot \frac{|\pi_1 (C) \cap H_1|}{|\pi_0 (C)|} \cdot
\frac{|\pi_2 (C) \cap H_2|}{|\pi_0 (C)|} \\ 
& = & \frac{|\pi_1 (C) \cap H_1||\pi_2 (C) \cap H_2|}{|\pi_0 (C)|}.
\end{eqnarray*}
By Lemma~\ref{autim}, $H_2$ is conjugate in $\GL_{\mbf a}(q)$ to the subgroup 
$\Cal J_{\mbf u}(q)$, and $H_1$ is conjugate to 
$\Cal J_{(u_r,\ldots,u_2,u_1)}(q)$. Also, the types of the classes $\pi_1(C)$,
$\pi_2(C)$ and $\pi_0 (C)$ are uniquely determined by $\Cal T$. 
By Proposition \ref{Hall:gen}, for $i=1,2$, $|\pi_i (C) \cap H_i|$ depends only on $\Cal T$ and $q$ (but not on $C$) and is a polynomial function of $q$. The same holds for $|\pi_0 (C)|$ by Lemma \ref{autogen}. 
By Lemma \ref{PORClem} (\ref{quot}), the result follows. \end{proof}

\begin{prop}\label{uniB} 
$H=H^{\mbf u}(q)$ is a uniform family of subgroups in 
$\GL_{s,s}(q)$.
\end{prop}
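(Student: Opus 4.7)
The plan is to deduce this from Lemma~\ref{sigH} by applying Lemma~\ref{subproj}. Recall that $\sigma:H\hra \GL_{\mbf b}(q)$ embeds $H$ into $\GL_{\mbf b}(q)$, where $\mbf b = (s,s,u_1,\ldots,u_r)$, via $(Y,Z)\mapsto (Y,Z,Y_{11},\ldots,Y_{rr})$. Let $\rho:\GL_{\mbf b}(q)\ra \GL_{s,s}(q)$ be the projection onto the first two components. Then $\rho\circ\sigma$ is the identity on $H$, so $\rho(\sigma(H))=H$. Hence it suffices to check that $\rho|_{\sigma(H)}$ is injective and that the polynomial-appearance hypothesis of Lemma~\ref{subproj} holds for $\sigma(H)$ with $l=2$.

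Injectivity is immediate: if $(Y,Z,Y_{11},\ldots,Y_{rr})\in \sigma(H)$, then the last $r$ components are determined by $Y$, so $\rho$ restricted to $\sigma(H)$ is one-to-one.

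For the polynomial-appearance condition, take any element $\mbf g=(Y,Z,Y_{11},\ldots,Y_{rr})\in\sigma(H)$; we must show that every irreducible polynomial appearing in some $Y_{ii}$ also appears in $Y$ (or $Z$). By Lemma~\ref{autim}, $Y$ is lower block triangular with diagonal blocks $Y_{11},\ldots,Y_{rr}$. Hence the characteristic polynomial of $Y$ factors as $\prod_{i=1}^{r} \chi_{Y_{ii}}$, and so every irreducible factor of $\chi_{Y_{ii}}$ is an irreducible factor of $\chi_Y$. Since the irreducible polynomials appearing in a matrix (in the sense defined in Section~\ref{set-up}) are exactly the irreducible factors of its characteristic polynomial, every polynomial appearing in $Y_{ii}$ appears in $Y$. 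Thus the hypothesis of Lemma~\ref{subproj} is satisfied.

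Applying Lemma~\ref{subproj} to $\sigma(H)$, which is a uniform family of subgroups in $\GL_{\mbf b}(q)$ by Lemma~\ref{sigH}, we conclude that $\rho(\sigma(H))=H$ is a uniform family of subgroups in $\GL_{s,s}(q)$. I do not anticipate any genuine obstacle here: once Lemma~\ref{sigH} is available, Proposition~\ref{uniB} is essentially a bookkeeping exercise, the only subtlety being the verification that the diagonal blocks of $Y$ contribute no new irreducible polynomials, which is immediate from the block-triangular shape given by Lemma~\ref{autim}.
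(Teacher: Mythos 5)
Your proof is correct and follows essentially the same route as the paper: project $\sigma(H)$ onto the first two components and invoke Lemma~\ref{subproj} together with Lemma~\ref{sigH}. Your added justification of the polynomial-appearance hypothesis via the block-triangular form of $Y$ from Lemma~\ref{autim} is exactly the (unstated) reason in the paper's one-line claim.
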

\begin{proof} Let $\pi:\GL_{\mbf b} (q) \ra \GL_{s,s}(q)$ be the projection 
onto the first two components. Then $\pi|_{\sigma(H)}$ is injective and
$\pi(\sigma(H))=H$. Moreover, 
for any $(Y,Z,Y_{11},\ldots,Y_{rr})\in \sigma(H)$,            
any irreducible polynomial appearing in $Y_{ii}$ for any $i$ also appears in 
$Y$. Thus, by Lemma \ref{subproj}, $\pi(\sigma(H))=H$ is a uniform family of
subgroups.
 \end{proof}

\section{An expression for the functor Ext}\label{Extsection}

Let $R$ be an integral domain (with identity). Let $A$ and $B$ be $R$-modules. The $R$-module 
$\Ext(A,B)=\Ext_R^1 (A,B)$ is defined as follows (see \cite[III.2]{HS}). Let
\begin{equation}\label{projrep}
0 \lra Q \stackrel{\mu}{\lra} P \stackrel{\epsilon}{\lra} A \lra 0
\end{equation} 
be an exact sequence of $R$-modules with $P$ projective. 
The map $\mu$ induces a homomorphism $\mu^*:\Hom(P,B) \ra \Hom(Q,B)$, given by 
$\delta \mapsto \delta\circ\mu$. 
Then 
$$
\Ext(A,B):=\coker(\mu^*)=\frac{\Hom(Q,B)}{\im \mu^*}.  
$$
It is shown in \cite[III.2]{HS} that $\Ext(-,-)$ is a bifunctor from the category of $R$-modules to itself, contravariant in the first and covariant in the second variable. Moreover, a different choice of a sequence \eqref{projrep} gives rise to the same functor up to natural equivalence.

For the remainder of this section, assume that $R$ is a principal ideal domain. If $A$ and $B$ are finitely generated modules over $R$, then both $A$ and $B$ are direct sums of cyclic modules.
Thus, it is easy to determine $\Ext(A,B)$ up to isomorphism using the isomorphisms 
$\Ext(A\oplus A',B)\simeq \Ext(A,B) \oplus \Ext(A',B)$ and
 $\Ext(A,B\oplus B')\simeq \Ext(A,B) \oplus \Ext(A,B')$ 
(see \cite[III.4]{HS}). 
However, our aim is to describe $\Ext(A,B)$ up to \emph{natural} equivalence.

Let $A$ be an $R$-module. Let 
$$
T(A)=\{a\in A: \exists r\in R\setminus \{0\}\quad ra=0 \} 
$$
be the torsion submodule of $A$. $T$ is a covariant functor from the category of $R$-modules to itself.

\begin{lem}\label{torsion} Suppose $A$ is a principal ideal domain. 
For finitely generated $R$-modules 
$A$ and $B$, $\Ext(A,B)$ is naturally equivalent to $\Ext(T(A),B)$.
\end{lem}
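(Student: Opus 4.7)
The plan is to exploit the natural short exact sequence
$$0 \lra T(A) \stackrel{\iota_A}{\lra} A \lra A/T(A) \lra 0,$$
which is natural in $A$ because any $R$-module homomorphism $f\colon A\to A'$ carries torsion to torsion, so that $T$ is a functor and $\iota\colon T\Rightarrow \id$ is a natural transformation. Applying the contravariant functor $\Ext(-,B)$ produces the long exact sequence
$$\Hom(A/T(A),B)\to\Hom(A,B)\to\Hom(T(A),B)\to\Ext(A/T(A),B)\to\Ext(A,B)\to\Ext(T(A),B)\to\Ext^2(A/T(A),B).$$

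First I would observe that since $R$ is a principal ideal domain and $A/T(A)$ is finitely generated and torsion-free, $A/T(A)$ is free, hence projective. Consequently $\Ext(A/T(A),B)=0$. Moreover, $R$ has global dimension at most $1$ (every submodule of a free module over a PID is free), so $\Ext^2(-,-)=0$ identically. Both the group to the left of $\Ext(A,B)\to\Ext(T(A),B)$ and the one to the right of it thus vanish, and the middle map $\iota_A^*$ is an isomorphism.

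It remains to package this into a natural equivalence of bifunctors. Naturality in $A$: for any homomorphism $f\colon A\to A'$, the square
$$\begin{array}{ccc} T(A) & \stackrel{\iota_A}{\hra} & A \\ T(f)\downarrow\phantom{T(f)} & & \phantom{f}\downarrow f \\ T(A') & \stackrel{\iota_{A'}}{\hra} & A' \end{array}$$
commutes, so by contravariant functoriality of $\Ext(-,B)$ the square $\iota_{A'}^*\circ f^*=T(f)^*\circ \iota_A^*$ commutes, i.e.\ the collection $\{\iota_A^*\}$ is a natural transformation $\Ext(-,B)\Rightarrow \Ext(T(-),B)$. Naturality in $B$ is automatic from the covariant functoriality of $\Ext(A,-)$ and $\Ext(T(A),-)$.

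There is no real obstacle here; the only points that require attention are (i) recognising that the two vanishings $\Ext(A/T(A),B)=0$ and $\Ext^2(A/T(A),B)=0$ together pin down $\iota_A^*$ as an isomorphism and (ii) tracking that the resulting isomorphism is natural in both variables, which follows from naturality of $\iota$ together with the bifunctoriality of $\Ext$.
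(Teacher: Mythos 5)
Your argument is correct and follows essentially the same route as the paper's proof: apply $\Ext(-,B)$ to the natural sequence $0\to T(A)\to A\to A/T(A)\to 0$ and use that $A/T(A)$ is finitely generated torsion-free, hence free, so the two flanking Ext-groups vanish and the middle map is a natural isomorphism. Your extra care about naturality in both variables is a welcome (if routine) elaboration of what the paper leaves implicit.
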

\begin{proof} The natural exact sequence 
$$
0 \lra T(A) \lra A \lra A/T(A) \lra 0 
$$ 
induces a natural long exact $\Ext$-sequence (see \cite[IV.7]{HS})
$$
\Ext(A/T(A), B) \lra \Ext(A,B) \lra \Ext(T(A), B) \lra \Ext_R^2 (A/T(A), B).  
$$ 
However, since $A/T(A)$ is a finitely generated free 
module, $\Ext_R^n (A/T(A), B)=0$ 
for all $n\ge 1$ by \cite[IV, Proposition 7.2]{HS}. 
\end{proof}
 
\begin{lem}\label{homiso} Let $R$ be a principal ideal domain. Suppose 
$P$ is a finitely generated free $R$-module and $B$ is a finitely generated $R$-module. 
Then the map $\tau: \Hom(P,R)\otimes B \ra \Hom(P,B)$ given by 
$$
\tau(f\otimes b)(a)=f(a)b \quad \forall f\in \Hom(P,R), b\in B, a\in P 
$$
is a natural equivalence of bifunctors.
\end{lem}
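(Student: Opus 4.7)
The plan is to check that $\tau$ is a well-defined $R$-module homomorphism, establish naturality in both variables by direct diagram chases, and then deduce bijectivity by reduction to the case $P = R$.

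First I would observe that the assignment $(f,b) \mapsto [a \mapsto f(a)b]$ from $\Hom(P,R)\times B$ to $\Hom(P,B)$ is $R$-bilinear (commutativity of $R$ is used to see that $f(a)b$ is $R$-linear in $f$ for fixed $b$ and in $b$ for fixed $f$), so by the universal property of the tensor product it factors uniquely through $\Hom(P,R)\otimes B$ to give the homomorphism $\tau$. Naturality then reduces to two routine diagram chases: given $\phi:P'\to P$ between finitely generated free modules and $\psi:B\to B'$, one checks that both paths around each naturality square send an elementary tensor $f\otimes b$ to $a\mapsto f(\phi(a))b$ and to $a\mapsto f(a)\psi(b)$ respectively, which is immediate from the definitions.

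For the isomorphism claim, I would reduce to the case $P=R$. Both functors $P\mapsto \Hom(P,R)\otimes B$ and $P\mapsto \Hom(P,B)$ convert finite direct sums in $P$ into finite direct sums (using that $\Hom(-,M)$ is additive and that $\otimes$ distributes over $\oplus$), and the naturality of $\tau$ just established ensures that $\tau$ respects these decompositions. Since $P$ is finitely generated free over the PID $R$, we have $P\simeq R^n$ for some $n\ge 0$, so it suffices to treat $P=R$.

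In that case, evaluation at $1\in R$ yields natural isomorphisms $\Hom(R,R)\simeq R$ and $\Hom(R,B)\simeq B$, and under these identifications $\tau$ becomes the standard isomorphism $R\otimes B\to B$ sending $r\otimes b\mapsto rb$, which is classical. The main obstacle is essentially bookkeeping rather than any genuine difficulty; the one point to watch is that when one decomposes $\tau$ along a direct sum decomposition of $P$, the resulting identification must be simultaneously natural in $B$, but this is automatic because the decomposition is performed using only the naturality in $P$ variable established in the first step.
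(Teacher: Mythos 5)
Your proof is correct, but it takes a different route from the paper's. The paper fixes a basis $e_1,\ldots,e_k$ of $P$ and invokes the structure theorem to write $B=Rf_1\oplus\cdots\oplus Rf_m$ as a sum of cyclic modules with annihilators $I_j$; it then exhibits explicit generating decompositions $\Hom(P,R)\otimes B=\bigoplus_{i,j} R(e^i\otimes f_j)$ and $\Hom(P,B)=\bigoplus_{i,j} R h^i_j$, checks $\tau(e^i\otimes f_j)=h^i_j$, and concludes that $\tau$ is an isomorphism because the corresponding generators have equal annihilators $I_j$. You instead never touch the structure of $B$: you reduce in the $P$ variable, using additivity of $\Hom(-,R)\otimes B$ and $\Hom(-,B)$ together with the already-established naturality to split $\tau$ along $P\simeq R^n$, and then identify $\tau$ for $P=R$ with the canonical isomorphism $R\otimes B\to B$. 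The trade-off: the paper's computation is completely explicit and stays within the toolkit it uses elsewhere (cyclic decompositions and annihilators over a PID), while your argument is cleaner and strictly more general --- it needs neither the PID hypothesis nor finite generation of $B$, only that $P$ is finitely generated free over a commutative ring --- and it correctly isolates the only point requiring care, namely that the direct-sum splitting of $\tau$ is governed by naturality in $P$, while bijectivity is a pointwise matter so no further compatibility in $B$ is needed.
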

\begin{proof} It is easy to see that $\tau$ is well defined and is a natural transformation. 
 Let $P=Re_1\oplus \cdots \oplus Re_k$, and let   
$B=Rf_1\oplus \cdots \oplus Rf_m$. Then $\Ann_R (e_i)=0$ and $\Ann(f_j)=I_j$ 
where $I_j$ is an ideal of $R$. Let $e^i\in \Hom(P,R)$ be the function given by 
$e^i (e_j)=\delta_{ij}$. Let $h^i_j\in\Hom(P,B)$ be the homomorphism given by $e_i\mapsto f_j$,
$e_l \mapsto 0$ for $l\ne i$. 
Then
\begin{eqnarray*}
\Hom(P,R)\otimes B & = & \bigoplus_{i=1}^k \bigoplus_{j=1}^m R (e^i\otimes f_j)
 \text{ and }\\
\Hom(P,B)  & = & \bigoplus_{i=1}^k \bigoplus_{j=1}^m R h^i_j.
\end{eqnarray*}
Since $\tau(e^i\otimes f_j)=h^i_j$ and $\Ann(e^i \otimes f_j)=\Ann(h^i_j)=I_j$, $\tau$ is an 
isomorphism. \end{proof}

Let $K$ be the field of fractions of $R$, and let $L=K/R$. If $A$ is an $R$-module, let 
$\widehat{A}=\Hom_R (T(A),L)$.

\begin{lem}\label{Extlem}\emph{\cite[VII, Proposition 2.3]{CE}} Suppose $R$ is a principal ideal domain. For a finitely generated torsion $R$-module $A$, $\Ext(A,R)$ is naturally 
equivalent to $\widehat{A}$.
\end{lem}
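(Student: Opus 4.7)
The plan is to apply the long exact $\Ext$-sequence associated (in the second variable) with the short exact sequence of coefficient modules
\begin{equation*}
0 \lra R \lra K \lra L \lra 0.
\end{equation*}
For any $R$-module $A$ this yields a natural long exact sequence
\begin{equation*}
0 \lra \Hom(A,R) \lra \Hom(A,K) \lra \Hom(A,L) \lra \Ext(A,R) \lra \Ext(A,K) \lra \cdots
\end{equation*}
(by the dual of \cite[IV.7]{HS}), and the idea is to show that the two outer Hom-terms and the next Ext-term all vanish when $A$ is torsion, so that the connecting homomorphism gives a natural isomorphism $\Hom(A,L) \cong \Ext(A,R)$. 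Since $A$ is torsion, $T(A)=A$ and hence $\widehat A = \Hom(A,L)$, which is exactly what is wanted.

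First I would verify that $\Hom(A,K)=0$ (and hence also $\Hom(A,R)=0$). This follows because $K$ is torsion-free as an $R$-module: if $\phi:A\ra K$ is any homomorphism and $a\in A$ satisfies $ra=0$ for some nonzero $r\in R$, then $r\phi(a)=0$ in $K$, forcing $\phi(a)=0$. Next I would verify that $\Ext(A,K)=0$. This rests on the standard fact that $K$ is a divisible $R$-module (every element is of the form $k = r\cdot(k/r)$ for any nonzero $r$), and over a principal ideal domain every divisible module is injective; consequently $\Ext(-,K)=0$. Neither of these arguments requires $A$ to be finitely generated.

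Combining these vanishings, the long exact sequence collapses to a short exact sequence
\begin{equation*}
0 \lra \Hom(A,L) \stackrel{\partial}{\lra} \Ext(A,R) \lra 0,
\end{equation*}
so $\partial$ is an isomorphism. For naturality in $A$, I would invoke the functoriality of the connecting homomorphism: any $R$-homomorphism $\alpha:A'\ra A$ gives rise to a commutative ladder of long exact sequences, from which the commutativity of the square involving $\partial$ and $\partial'$ is immediate. Finally, since $A$ is torsion, $\Hom(A,L)=\Hom(T(A),L)=\widehat A$ by definition, completing the natural equivalence $\Ext(-,R)\simeq \widehat{(-)}$ on the full subcategory of finitely generated torsion modules.

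I do not foresee a major obstacle: the only subtle point is the divisibility/injectivity of $K$, which is a classical fact for PIDs and is exactly the input that lets the sequence terminate on the right. The finite generation hypothesis plays no essential role in the argument as sketched; it is presumably imposed only because the lemma will be applied in that setting later in the paper.
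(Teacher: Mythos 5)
Your proof is correct and follows essentially the same route as the paper: both apply the Hom--Ext exact sequence arising from $0\to R\to K\to L\to 0$, kill $\Hom(A,K)$ using that $A$ is torsion and $K$ torsion-free, and kill $\Ext(A,K)$ using the injectivity of $K$, so the connecting map gives the natural isomorphism $\widehat A=\Hom(A,L)\cong\Ext(A,R)$. Your added justifications (divisibility of $K$ implies injectivity over a PID, and the observation that finite generation is not needed) are accurate but do not change the argument.
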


\begin{proof} 
We repeat the proof in \cite{CE} for convenience. The exact sequence 
$$
0 \lra R \lra K \lra L \lra 0
$$  
induces a natural Hom-Ext exact sequence 
$$
\Hom(A,K) \lra \Hom(A, L) \lra \Ext(A, R) \lra \Ext(A,K)
$$
(see \cite[III, Theorem 5.2]{HS}). Since $A$ is a torsion module, $\Hom(A,K)=0$. Since $K$ is an injective $R$-module, by \cite[III, Proposition 2.6]{HS}, $\Ext(A,K)=0$. The result follows.
\end{proof}  

\begin{thm}\label{Ext} Suppose $R$ is a principal ideal domain. For finitely generated $R$-modules $A$ and $B$, $\Ext(A,B)$ is naturally equivalent to $\widehat{A}\otimes B$.
\end{thm}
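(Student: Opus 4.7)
The plan is to chain together the three lemmas already proved (Lemma~\ref{torsion}, Lemma~\ref{homiso} and Lemma~\ref{Extlem}) with the right-exactness of the tensor product, reducing the general statement to the case already handled.

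First I would apply Lemma~\ref{torsion} to replace $A$ by its torsion submodule: we have a natural equivalence $\Ext(A,B) \simeq \Ext(T(A), B)$, and since $T(T(A))=T(A)$ we also have $\widehat{A}=\widehat{T(A)}$. So it suffices to produce a natural equivalence $\Ext(A,B)\simeq \widehat{A}\otimes B$ in the case when $A$ is a finitely generated torsion $R$-module.

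Next, choose a short free resolution
\[
0 \lra Q \stackrel{\mu}{\lra} P \stackrel{\epsilon}{\lra} A \lra 0,
\]
which exists because $R$ is a PID and $A$ is finitely generated (so $Q$ and $P$ are both finitely generated free). By definition, $\Ext(A,B)=\coker(\mu^*)$, where $\mu^*\colon \Hom(P,B)\ra \Hom(Q,B)$. By Lemma~\ref{homiso}, applied naturally in both the free module and in $B$, we may identify $\Hom(P,B)\simeq \Hom(P,R)\otimes B$ and $\Hom(Q,B)\simeq \Hom(Q,R)\otimes B$, and under this identification $\mu^*$ becomes $\mu^\vee\otimes\id_B$, where $\mu^\vee\colon\Hom(P,R)\ra\Hom(Q,R)$ is the map induced on $R$-duals. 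Because tensoring with $B$ is right exact, $\coker(\mu^\vee\otimes \id_B)\simeq \coker(\mu^\vee)\otimes B$ naturally. But $\coker(\mu^\vee)=\Ext(A,R)$, and Lemma~\ref{Extlem} gives a natural equivalence $\Ext(A,R)\simeq \widehat{A}$. Stringing everything together,
\[
\Ext(A,B)\simeq \coker(\mu^\vee)\otimes B \simeq \Ext(A,R)\otimes B \simeq \widehat{A}\otimes B,
\]
which yields the desired natural equivalence once $A$ is torsion, and then in general via the first reduction.

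The routine but fiddly part is verifying that all the isomorphisms above are genuinely natural in both arguments: naturality in $B$ is essentially formal from Lemma~\ref{homiso} and the functoriality of $\otimes B$, while naturality in $A$ requires that a morphism $A\ra A'$ can be lifted to a morphism of chosen free resolutions and that the resulting square commutes up to the standard homotopy argument — this is the usual independence-of-resolution fact for $\Ext$, which in turn gives naturality after passing to cokernels. This is the only real obstacle, and it is handled by the standard comparison theorem for projective resolutions rather than by any new ingredient.
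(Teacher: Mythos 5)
Your proposal is correct and follows essentially the same route as the paper's proof: reduce to torsion $A$ via Lemma~\ref{torsion}, take a short free resolution, use Lemma~\ref{homiso} to identify $\Hom(P,B)$ with $\Hom(P,R)\otimes B$ compatibly with the induced maps, and conclude by right-exactness of $-\otimes B$ together with Lemma~\ref{Extlem}. The only difference is your explicit remark on naturality via the comparison theorem, which the paper subsumes in its earlier observation that $\Ext$ is independent of the chosen resolution up to natural equivalence.
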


\begin{proof} By Lemma~\ref{torsion}, we may assume that $A$ is a torsion module. Then, by 
Lemma~\ref{Extlem}, $\Ext(A,R)$ is naturally equivalent to $\widehat{A}$. It remains to show 
that $\Ext(A,B)$ is naturally equivalent to $\Ext(A,R)\otimes B$. 

To see this, consider an 
exact sequence \eqref{projrep} with $P$ a finitely generated free $R$-module. 
Then $Q$ is also free. Let $\alpha: \Hom(P,R) \ra \Hom(Q,R)$ and $\mu^*: \Hom(P,B)\ra \Hom(Q,B)$ be the maps induced by $\mu$.
Lemma~\ref{homiso} yields natural equivalences 
$\tau_1: \Hom(P,R)\otimes B \ra \Hom(P,B)$ and $\tau_2: \Hom(Q,R)\otimes B \ra \Hom(Q,B)$. 
It is 
easy to check that the diagram
$$\xymatrix{
\Hom(P,R) \otimes B \ar[r]^{\alpha\otimes\id_B} \ar[d]^{\tau_1} & 
\Hom (Q,R) \otimes B \ar[d]^{\tau_2} \\
\Hom(P,B) \ar[r]^{\mu^*} & \Hom(Q,B) 
}
$$   
is commutative. Thus, $\Ext(A,B)=\coker \mu^*$ is naturally equivalent to 
$$
\frac{\Hom(Q,R)\otimes B}{(\im\alpha) \otimes B}.
$$
This is naturally equivalent to
$(\coker \alpha)\otimes B=\Ext(A,R)\otimes B$ because the functor $-\otimes B$ is right exact.
\end{proof}

\section{Conclusion of the proof} \label{finproof}

Let $p$ be a prime number.
Let $A$ and $B$ be finite abelian $p$-groups. We can view $A$ and $B$ as modules over the discrete valuation ring $\mZ_p$. Moreover, $A$ and $B$ may be seen as
Lie algebras over $\mZ_p$ with the zero Lie bracket operation. Call an exact sequence 
\begin{equation}\label{eq:extension}
0\lra B\stackrel{\iota}{\lra} E\stackrel{\pi}{\lra} A\lra 0 
\end{equation}
of $\mZ_p$-Lie algebras a \emph{central extension} 
of $A$ by $B$ if $\iota(B)$  is central in $E$, i.e. $[E,\iota(B)]=0$.

Another central extension 
$$0\lra B\lra E'\lra A\lra 0$$ 
is said to be \emph{equivalent} to \eqref{eq:extension} 
if there is a Lie algebra isomorphism $E\ra E'$ making the diagram
\begin{displaymath}
\xymatrix{ &  &  E \ar[dr] \ar[dd] &  &  \\
0 \ar[r] & B \ar[ur] \ar[dr] & & A \ar[r] & 0  \\
& & E' \ar[ur] & &
}
\end{displaymath}
commute. Let $\Cal L(A,B)$ be the set of all equivalence classes 
of central extensions of $A$ by $B$. This definition is very similar to 
the definition of 
$\Ext(A,B)$ (as a set rather than an abelian group) through exact sequences 
(see \cite[III.1]{HS}). 
The only difference is the Lie bracket operation; since the image of 
$B$ is central, that operation corresponds to a homomorphism 
from $\bigwedge^2 A$ into $B$. 

The group $\Aut(A)\times \Aut(B)$ acts on $\Cal L(A,B)$ as follows: if 
$g\in \Aut(A)$, $h\in\Aut(B)$, $(g,h)$ maps the equivalence class of the sequence \eqref{eq:extension} to the equivalence class of
$$
0\lra B \xrightarrow{\iota\circ h^{-1}} E \xrightarrow{g \circ \pi} 
A \lra 0.
$$

\begin{prop}\label{Lnat} If $A$ and $B$ are finite $\mZ_p$-modules, 
$\Cal L(A,B)$ is isomorphic, as an $\Aut(A)\times\Aut(B)$-set, to
$$
\Hom_{\mZ_p}\left(\bigwedge\nolimits^2 A, B\right) 
\oplus \left( \widehat{A}\otimes B \right).
$$
\end{prop}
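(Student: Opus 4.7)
The plan is to decouple a central Lie-algebra extension into two independent pieces of data --- the underlying $\mZ_p$-module extension, classified by $\Ext^1_{\mZ_p}(A,B)$, and a bracket $\bigwedge^2 A \to B$ --- and then apply Theorem~\ref{Ext} to identify the $\Ext$ piece with $\widehat{A}\otimes B$. The feature that enables this decoupling, and in particular forces the bracket to descend to $\bigwedge^2 A$, is the centrality of $\iota(B)$ in $E$.

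I would construct a map $\Phi\colon \Cal L(A,B) \to \Ext^1_{\mZ_p}(A,B) \times \Hom_{\mZ_p}(\bigwedge^2 A, B)$ as follows. Given a central extension \eqref{eq:extension}, forgetting the Lie bracket yields a module extension, hence a class in $\Ext^1_{\mZ_p}(A,B)$. For the second component, pick any set-theoretic section $s\colon A \to E$ of $\pi$ and set $\phi_E(a_1,a_2) := \iota^{-1}([s(a_1),s(a_2)]_E)$. Since $\pi$ is a Lie homomorphism and $A$ is abelian, this commutator lies in $\iota(B)$; since $\iota(B)$ is central, modifying $s$ by any function $A \to \iota(B)$ leaves $\phi_E$ unchanged, which in turn forces $\phi_E$ to be $\mZ_p$-bilinear, and it is alternating because $[e,e]_E=0$. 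Hence $\phi_E$ factors through $\bigwedge^2 A$, and equivalent Lie extensions manifestly produce equivalent module extensions and the same $\phi_E$, so $\Phi$ is well defined. An inverse to $\Phi$ is obtained by equipping any module extension $E$ with the bracket $[e_1,e_2]_E := \iota(\phi(\pi(e_1)\wedge \pi(e_2)))$ for a chosen $\phi \in \Hom_{\mZ_p}(\bigwedge^2 A,B)$; this bracket is alternating and satisfies the Jacobi identity trivially because its image is contained in $\iota(B) \subseteq Z(E)$.

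For equivariance under $(g,h) \in \Aut(A) \times \Aut(B)$, using the section $s \circ g^{-1}$ in the twisted extension a direct computation gives $\phi_{(g,h)\cdot E} = h \circ \phi_E \circ \bigwedge^2 g^{-1}$, which is the standard action on $\Hom_{\mZ_p}(\bigwedge^2 A, B)$; the induced action on $\Ext^1_{\mZ_p}(A,B)$ is the usual bifunctorial one. Applying Theorem~\ref{Ext} to replace $\Ext^1_{\mZ_p}(A,B)$ with $\widehat{A}\otimes B$ as an $\Aut(A) \times \Aut(B)$-set then completes the argument. The one point deserving care is the well-definedness of $\phi_E$: both independence of the set-theoretic section and the promotion of biadditivity-on-representatives to $\mZ_p$-bilinearity rest on centrality of $\iota(B)$, and one must further use $[e,e]_E=0$ to obtain the alternating property needed to factor through $\bigwedge^2 A$ rather than $A \otimes A$. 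Beyond this the proof is formal bookkeeping.
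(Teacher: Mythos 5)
Your proposal follows essentially the same route as the paper: decompose a central extension into its underlying $\mZ_p$-module extension class in $\Ext^1_{\mZ_p}(A,B)$ plus the induced bracket in $\Hom_{\mZ_p}\bigl(\bigwedge^2 A, B\bigr)$ (using centrality of $\iota(B)$ for well-definedness), and then invoke Theorem~\ref{Ext} to replace $\Ext$ by $\widehat{A}\otimes B$. Your write-up merely makes explicit some details (the inverse construction, the equivariance computation) that the paper dispatches with ``clearly,'' so it is correct and matches the paper's argument.
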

\begin{proof} Consider an element $x\in \Cal L(A,B)$ given by \eqref{eq:extension}. Define
a homomorphism $y:\bigwedge^2 A\ra B$ by 
$$
a_1\wedge a_2 \mapsto \iota^{-1}([\tilde{a}_1,\tilde{a}_2])
$$
where
$a_1,a_2\in A$ and $\tilde{a}_i\in\pi^{-1}(a_i)$. 
Since $\iota(B)$ is central in $E$, this does not depend on the choices 
of $\tilde{a}_1$, $\tilde{a}_2$. We may view \eqref{eq:extension} as an exact 
sequence of $\mZ_p$-modules. By \cite[III.2]{HS}, equivalence classes of such
sequences are in a one-to-one correspondence with $\Ext(A,B)$. 
Let 
$z$ be the element of $\Ext_{\mZ_p}(A,B)$ corresponding to $x$. 
Clearly, $x\mapsto (y,z)$ defines 
a natural equivalence of $\Cal L(A,B)$ and 
$$
\Hom\left(\bigwedge\nolimits^2 A,B\right) \oplus \Ext(A,B).
$$ 
By Theorem \ref{Ext}, the result follows. \end{proof} 

Now let $\kappa$ and $\lda$ be partitions, and consider the $\mZ_p$-modules (hence, Lie algebras) $M_{\mZ_p,\kappa}=M_{\kappa}$ and $M_{\lda}$ in the roles of $A$ and $B$ respectively.
 Let $L_{\kappa,\lda}(p)$ be the set of 
isomorphism classes of pairs $(E,B)$ such that
\begin{enumerate}[(i)]
\item $E$ is a Lie algebra over $\mZ_p$;
\item $B$ is an ideal of $E$ isomorphic (as a Lie algebra) to $M_{\lda}$;
\item $E/B\simeq M_{\kappa}$; 
\item $[E,B]=0$.
\end{enumerate}
Naturally, an \emph{isomorphism} between two such pairs $(E,B)$ and $(E',B')$ is an isomorphism $f:E\ra E'$ of Lie algebras such that $f(B')=B$. 
Write $[(E,B)]$ for the isomorphism class of $(E,B)$. 
Each $[(E,B)] \in L_{\kappa,\lda} (p)$ gives rise to an exact sequence
\begin{equation}\label{exact1}
0\lra B \lra E \lra \frac{E}{B} \lra 0. 
\end{equation}
If we use arbitrarily chosen isomorphisms $B\ra M_{\kappa}$ and 
$E/B\ra M_{\lda}$, then \eqref{exact1} becomes 
\begin{equation}\label{exact2}
0 \lra M_{\lda} \lra E \lra M_{\kappa} \lra 0,
\end{equation}
giving rise to an element of  
$\Cal L(M_{\kappa},M_{\lda})$.
Clearly, the $\Aut(M_{\kappa})\times \Aut(M_{\lda})$-orbit of 
\eqref{exact2} does not depend on the choice of the isomorphisms. 
Moreover, it is easy to see that this 
establishes a one-to-one correspondence between 
$L_{\kappa,\lda}(p)$ and 
$\Aut(M_{\kappa})\times \Aut(M_{\lda})$-orbits on $\Cal L (M_{\kappa},M_{\lda})$.
This, together with Lemma \ref{Lnat}, yields the following result.

\begin{cor}\label{corr1} Let $\kappa$ and $\lda$ be partitions. The set 
$L_{\kappa,\lda}(p)$
is in a one-to-one correspondence with 
$\Aut(M_{\kappa})\times \Aut(M_{\lda})$-orbits on 
$$
\Hom\left(\bigwedge\nolimits^2 M_{\kappa},M_{\lda} \right) \oplus 
\left( \widehat{M}_{\kappa}\otimes M_{\lda} \right).
$$
\end{cor}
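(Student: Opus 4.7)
The plan is to formalise the correspondence sketched in the paragraph preceding the statement, and then invoke Proposition~\ref{Lnat}. I will construct a map
$$
\Phi: L_{\kappa,\lda}(p) \lra \big(\Aut(M_\kappa)\times\Aut(M_\lda)\big)\backslash \Cal L(M_\kappa,M_\lda)
$$
as follows. Given $[(E,B)]\in L_{\kappa,\lda}(p)$, pick Lie algebra isomorphisms $\phi:M_\lda\ra B$ and $\psi:E/B\ra M_\kappa$; these exist by conditions (ii) and (iii) in the definition of $L_{\kappa,\lda}(p)$. Since $[E,B]=0$, the sequence
$$
0\lra M_\lda \xrightarrow{\iota\phi} E \xrightarrow{\psi\pi} M_\kappa \lra 0
$$
(where $\iota:B\hra E$ and $\pi:E\thra E/B$ are the canonical maps) is a central extension, so it represents a class $[\xi]\in\Cal L(M_\kappa,M_\lda)$. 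Set $\Phi([(E,B)])$ to be the orbit of $[\xi]$.

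Next I verify that $\Phi$ is well defined and bijective. Any alternative choice of $\phi$ has the form $\phi\circ h^{-1}$ for some $h\in\Aut(M_\lda)$, and any alternative $\psi$ has the form $g\circ\psi$ for some $g\in\Aut(M_\kappa)$; the resulting sequence is precisely the $(g,h)$-twist of the original, hence lies in the same orbit. If $f:E\ra E'$ is an isomorphism of pairs with $f(B)=B'$, then $f$ restricts to an isomorphism $B\ra B'$ and descends to $\bar f:E/B\ra E'/B'$; using $\phi$, $\psi$ on one side and $f|_B\circ\phi$, $\psi\circ\bar f^{-1}$ on the other, the two sequences are equivalent via $f$ itself, so the orbits coincide. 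Surjectivity of $\Phi$ is immediate: any central extension $0\ra M_\lda\ra E\ra M_\kappa\ra 0$ yields a pair $(E,\iota(M_\lda))$ in $L_{\kappa,\lda}(p)$ mapping back to its own orbit. For injectivity, if $\Phi([(E,B)])=\Phi([(E',B')])$, some twist of the sequence attached to $(E',B')$ is equivalent to that attached to $(E,B)$; the equivalence is a Lie algebra isomorphism $E\ra E'$ carrying $B$ bijectively onto $B'$, i.e.\ an isomorphism of pairs.

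Finally I apply Proposition~\ref{Lnat}, which provides an $\Aut(M_\kappa)\times\Aut(M_\lda)$-equivariant bijection between $\Cal L(M_\kappa,M_\lda)$ and $\Hom_{\mZ_p}(\bigwedge^2 M_\kappa,M_\lda)\oplus(\widehat M_\kappa\otimes M_\lda)$; composing with $\Phi$ yields the claimed correspondence. I do not anticipate a genuine obstacle: the argument is categorical bookkeeping, with the only subtlety being to keep the order/inverse conventions for the action of $(g,h)$ consistent with the definition given just before Proposition~\ref{Lnat}.
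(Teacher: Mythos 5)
Your argument is correct and is essentially the paper's own: the paper establishes the same bijection between $L_{\kappa,\lda}(p)$ and $\Aut(M_{\kappa})\times\Aut(M_{\lda})$-orbits on $\Cal L(M_{\kappa},M_{\lda})$ via arbitrarily chosen isomorphisms (leaving the well-definedness and bijectivity checks as ``easy to see''), and then invokes Proposition~\ref{Lnat}. You simply carry out those routine verifications explicitly, with the action conventions handled correctly.
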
 

Note that any finite Lie algebra $E$ of class at most 2 over 
$\mZ_p$ has a central ideal $B$ such that $[(E,B)]\in L_{\kappa,\lda}(p)$ 
for some partitions $\kappa$ and $\lda$: one can take $B=Z(E)$, for example. 
However, such a central ideal $B$ is, in general, not unique. 
  
For the rest of the section, we assume that $\kappa=(1^m)$ for some $m$, 
so $M_{\kappa}$ is an elementary abelian $p$-group, 
which may be identified with a vector space $V\simeq \mF_p^m$ over $\mF_p$. 
The $\mZ_p$-module $\widehat{M}_{\kappa}$ is then identified with $V^*$.  
By Lemma \ref{Lnat},
\begin{eqnarray}
\Cal L(V,M_{\lda}) & 
\simeq &
\Hom\left(\bigwedge\nolimits^2 V, M_{\lda} \right)
\oplus \left(V^* \otimes M_{\lda} \right) \nonumber\\
& \simeq &
\Hom\left(\bigwedge\nolimits^2 V, M_{\lda}[p] \right)
\oplus \left(V^* \otimes \frac{M_{\lda}}{p M_{\lda}} \right)\label{Lnat2}
\end{eqnarray}
where the isomorphisms are those of 
$\Aut(M_{\kappa})\times \Aut(M_{\lda})$-sets. 
For the second isomorphism, observe that
$V^*\otimes M_{\lda}$ is naturally isomorphic to 
$V^*\otimes (M_{\lda}/p M_{\lda})$ and that the image of any $\mZ_p$-homomorphism
from $\bigwedge^2 V$ into $M_{\lda}$ is contained in $M_{\lda}[p]$. 

Using this, one could apply Theorem 
\ref{Hig} to show that $|L_{\kappa,\lda}(p)|$ is a PORC function of $p$. However,
 our aim is to count only 
those elements $[(E,B)]$ of $L_{\kappa,\lda}(p)$ that satisfy $Z(E)=B$. 
We shall do this by an inductive argument at the end of this section. 
We need a more general construction for this purpose. 

Let $\mbf d=(d_1,\ldots,d_l)$ be a sequence of nonnegative integers such that
$d_1+\cdots+d_l=m$ and $d_i>0$ whenever $1\le i\le l-1$. Fix a flag
$$
0 = U_{0} < U_1 < U_2 < \cdots < U_{l-1} \le V
$$
in $V$ such that $\dim(U_i)-\dim(U_{i-1})=d_i$. (For convenience, let $U_l=V$.)

Let $F_{m,\mbf d,\lda}(p)$ be the set of all isomorphism 
classes of tuples $(E,B;W_1,\ldots,W_{l-1})$ such that 
\begin{enumerate}[(i)]
\item $E$ is a finite Lie algebra over $\mZ_p$ (of nilpotency class 2); 
\item  $B$ is a central ideal of $E$ isomorphic to $M_{\lda}$;
\item $E/B$ is an elementary abelian $p$-group of dimension $m$ over $\mF_p$ with the zero Lie bracket operation;
\item  $W_0,\ldots,W_{l-1}$ are ideals of $E$ and $B=W_0 \subset W_1 \subset \cdots \subset 
W_{l-1} \subseteq W_l:=E$;
\item  $\dim_{\mF_p} (W_i/B)-\dim_{\mF_p}(W_{i-1}/B)=d_i$ for $i=1,\ldots,l$;
\item \label{centcon} $W_{l-1}\subseteq Z(E)$.
\end{enumerate}

As before, let $[(E,B;W_1,\ldots,W_{l-1})]$ be the isomorphism 
class of $(E,F;W_1,\ldots,W_l)$. Each 
$[(E,B;W_1,\ldots,W_{l-1})]\in F_{m,\mbf d,\lda}(p)$ gives rise to an exact sequence
\begin{equation}\label{exact3}
0\lra B \lra E \lra \frac{E}{F} \lra 0.
\end{equation}
As above, we may use arbitrarily chosen isomorphisms $B\ra M_{\lda}$ and 
$$
(E/B;W_1/B,\ldots,W_{l-1}/B) \lra (V; U_1,\ldots,U_{l-1})
$$ 
to obtain an exact sequence
$$
0 \lra M_{\lda} \lra E \lra V \lra 0. 
$$
from \eqref{exact3}, giving rise to 
an element of $\Cal L(V, M_{\lda})$. This establishes a one-to-one 
correspondence between $F_{m,\mbf d,\lda}(p)$ and the 
$\scr P(V; U_1,\ldots,U_l)\times \Aut(M_{\lda})$-orbits on a subset 
$\Cal L'(V,M_{\lda}) \subseteq \Cal L(V,M_{\lda})$. Here, $\Cal L'(V,M_{\lda})$ is the set of those elements of $\Cal L(V,M_{\lda})$ that are given by  
exact sequences
$$
0 \lra M_{\lda} \lra E \stackrel{\pi}{\lra} V \lra 0 
$$
such that $\pi^{-1}(U_{l-1})$ is central in $E$. (This condition corresponds to condition (\ref{centcon}) of the definition of $F_{m,\mbf d,\lda}(p)$.)  
If $B$ is a finite $\mZ_p$-module, let
$$
\Cal F(V,U_{l-1},B)=
\Hom\left(\bigwedge\nolimits^2 \left( \frac{V}{U_{l-1}} \right), B[p] \right) 
\oplus \left( V^* \otimes \frac{B}{pB} \right). 
$$ 
Using the isomorphism \eqref{Lnat2}, we deduce the following result.

\begin{prop}\label{corr2} There is a one-to-one correspondence between 
$F_{m,\mbf d,\lda}(p)$ and the orbits of
$\scr P(V;U_1,\ldots,U_{l-1})\times \Aut(M_{\lda})$ in 
$\Cal F(V,U_{l-1},M_{\lda})$.
\end{prop}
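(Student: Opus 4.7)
The plan is to mimic, in the presence of the flag data, the argument that proved Corollary~\ref{corr1}, and then use the explicit form of the isomorphism \eqref{Lnat2} to carve out the subset that records the centrality condition on $W_{l-1}$. First I would verify the claim made in the paragraph immediately before the proposition: isomorphism classes in $F_{m,\mbf d,\lda}(p)$ are in bijection with $\scr P(V;U_1,\ldots,U_{l-1})\times\Aut(M_{\lda})$-orbits on $\Cal L'(V,M_{\lda})\subseteq \Cal L(V,M_{\lda})$. Starting from a representative $(E,B;W_1,\ldots,W_{l-1})$, one picks arbitrary isomorphisms $B\stackrel{\sim}{\ra} M_{\lda}$ and $(E/B;W_1/B,\ldots,W_{l-1}/B)\stackrel{\sim}{\ra}(V;U_1,\ldots,U_{l-1})$, and observes that two such choices differ by an element of $\scr P(V;U_1,\ldots,U_{l-1})\times\Aut(M_{\lda})$.

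Next, Proposition~\ref{Lnat} together with the refinement \eqref{Lnat2} supplies a $\GL(V)\times\Aut(M_{\lda})$-equivariant bijection
\[
\Cal L(V,M_{\lda}) \longleftrightarrow \Hom\left(\bigwedge\nolimits^2 V,\, M_{\lda}[p]\right) \oplus \left(V^*\otimes \frac{M_{\lda}}{pM_{\lda}}\right),
\]
which \emph{a fortiori} is $\scr P(V;U_1,\ldots,U_{l-1})\times\Aut(M_{\lda})$-equivariant. The remaining task is to identify the image of $\Cal L'(V,M_{\lda})$ under this bijection with $\Cal F(V,U_{l-1},M_{\lda})$.

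To that end I would unwind the explicit formula from the proof of Proposition~\ref{Lnat}. Given $x\in \Cal L(V,M_{\lda})$ represented by $0\ra M_{\lda}\stackrel{\iota}{\ra} E\stackrel{\pi}{\ra} V\ra 0$, the first component of its image is the bracket-encoding map $y(a_1\wedge a_2)=\iota^{-1}([\tilde a_1,\tilde a_2])$, while the second component depends only on the underlying $\mZ_p$-module extension and so is insensitive to any centrality hypothesis on a Lie-ideal of $E$. The condition $\pi^{-1}(U_{l-1})\subseteq Z(E)$ therefore translates precisely into $y(u\wedge v)=0$ for every $u\in U_{l-1}$ and $v\in V$, i.e.\ into $y$ factoring through the canonical surjection $\bigwedge^2 V \twoheadrightarrow \bigwedge^2(V/U_{l-1})$. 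Consequently the image of $\Cal L'(V,M_{\lda})$ is exactly $\Cal F(V,U_{l-1},M_{\lda})$, and combining with the first step yields the proposition. The only genuinely non-formal point is the last translation, and it is immediate from the explicit definition of $y$ and from the fact that $\iota(M_{\lda})$ is already central, so no serious obstacle is anticipated.
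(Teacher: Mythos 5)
Your proposal is correct and follows essentially the same route as the paper, whose ``proof'' of the proposition is precisely the discussion preceding it: the bijection with orbits on $\Cal L'(V,M_{\lda})$ and then the identification via \eqref{Lnat2}. You correctly isolate and justify the one non-formal step, namely that $\pi^{-1}(U_{l-1})\subseteq Z(E)$ translates (because $\iota(M_{\lda})$ is already central) into the bracket-encoding map $y$ vanishing on $U_{l-1}\wedge V$, hence factoring through $\bigwedge^{2}(V/U_{l-1})$, while the $\Ext$-component is unaffected.
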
 

The following lemma is key to proving Theorem \ref{Lie}.
\begin{lem}\label{key} Let $\lda$ be a partition and $m\in \mN$. Let 
$\mbf d=(d_1,\ldots,d_l)$ be a tuple of nonnegative integers such that 
$m=d_1+\cdots+d_l$ and $d_i>0$ for $i\in [1,l-1]$. 
Then $|F_{m,\mbf d,\lda}(p)|$ is a PORC function of $p$.    
\end{lem}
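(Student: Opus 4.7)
The plan is to apply Theorem~\ref{Hig} to the orbit description provided by Proposition~\ref{corr2}, which identifies $|F_{m,\mbf d,\lda}(p)|$ with the number of orbits of $\scr P(V;U_1,\ldots,U_{l-1}) \times \Aut(M_\lda)$ on the $\mF_p$-vector space
\[
\Cal F(V,U_{l-1},M_\lda) = \Hom\bigl(\bigwedge\nolimits^2 (V/U_{l-1}),\, M_\lda[p]\bigr) \oplus \bigl(V^* \otimes M_\lda/pM_\lda\bigr).
\]
Since $M_\lda[p]$ and $M_\lda/pM_\lda$ are annihilated by $p$, the action of $\Aut(M_\lda)$ on $\Cal F(V,U_{l-1},M_\lda)$ depends only on the image of the map $\beta:\Aut(M_\lda)\to \GL(M_\lda/pM_\lda)\times \GL(M_\lda[p])$ defined in Section~\ref{autsection}, so I would replace $\Aut(M_\lda)$ by $H^{\mbf u}(p):=\im\beta$ without changing the orbit count; here $\mbf u=(u_1,\ldots,u_r)$ records the multiplicities of the distinct parts of $\lda$ and $s:=u_1+\cdots+u_r$.

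To fit the resulting orbit count into the framework of Theorem~\ref{Hig}, I would use the flag embedding $\tau_{\mbf U}$ of Section~\ref{Hallpoly} to identify $\scr P(V;U_1,\ldots,U_{l-1})$ with its image $\Cal J(\mbf U)\subseteq \GL_{\mbf a}(p)$, where $\mbf a=(m,d_1,\ldots,d_l)$. The acting group then embeds into $\GL_{\mbf a,s,s}(p)$ as $\Cal J(\mbf U)\times H^{\mbf u}(p)$. Proposition~\ref{Hall:gen} shows that $\Cal J(\mbf U)$ is a uniform family in $\GL_{\mbf a}(p)$; Proposition~\ref{uniB} gives the same for $H^{\mbf u}(p)$ in $\GL_{s,s}(p)$; and Lemma~\ref{uniprod} combines these into a uniform, hence PORC, family of subgroups of $\GL_{\mbf a,s,s}(p)$.

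Next, I would exhibit the induced action on $\Cal F(V,U_{l-1},M_\lda)$ as the restriction of an algebraic family $\Gamma:\GL_{\mbf a,s,s}(\mQ)\to \GL_N(\mQ)$, with $N=\binom{d_l}{2}s+ms$. The crucial point is that $V/U_{l-1}=U_l/U_{l-1}$ occurs as a separate $\GL$-factor inside $\GL_{\mbf a}$, so the action on $\bigwedge\nolimits^2(V/U_{l-1})$ is a standard exterior-square construction on that component; the action on $V^*$ is the dual of the tautological representation of the first factor of $\GL_{\mbf a}$; and the two $\GL_s$-factors act on $M_\lda[p]$ and $M_\lda/pM_\lda$ respectively. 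All of these are built from tensor, dual and exterior-square functors on $\GL$-representations, so the entries of $\Gamma(\mbf g)$ are rational functions of those of $\mbf g$ with denominators powers of the various determinants. Theorem~\ref{Hig}, combined with the remark following it (so that the exceptional set may be taken empty as $p$ ranges over all primes), then yields that $|F_{m,\mbf d,\lda}(p)|=\gamma(\Cal J(\mbf U)\times H^{\mbf u}(p),\,\mF_p^N)$ is a PORC function of $p$.

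The only step demanding real care is the reformulation of the action on $\bigwedge\nolimits^2(V/U_{l-1})$---which a priori is defined only on $\scr P(V;U_1,\ldots,U_{l-1})$, since it relies on $U_{l-1}$ being preserved---as an algebraic action of a separate $\GL$-factor via $\tau_{\mbf U}$. Once this reformulation is in place, the proof reduces to a routine combination of the results on types and uniform families developed in Sections~\ref{types}--\ref{autsection} with the orbit-counting theorem of Section~\ref{higthm}.
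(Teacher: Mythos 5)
Your proposal is correct and follows essentially the same route as the paper: Proposition~\ref{corr2}, an algebraic family $\Gamma$ built from exterior-square, dual and tensor constructions, uniformity of the acting subgroups via Propositions~\ref{Hall:gen} and~\ref{uniB} together with Lemma~\ref{uniprod}, and then Theorem~\ref{Hig}. The only (harmless) difference is that the paper composes with the natural map onto $\GL(V)\times\GL(V/U_{l-1})\times\GL(M_{\lda}[p])\times\GL(M_{\lda}/pM_{\lda})$ and uses Lemma~\ref{subproj} to see that the parabolic's image there is uniform, whereas you stay inside the larger group $\GL_{\mbf a,s,s}(p)$ and let $\Gamma$ act trivially through the intermediate flag factors, thereby bypassing Lemma~\ref{subproj}.
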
 

\begin{proof} Let $G=\scr P(V;U_1,\ldots,U_{l-1})\times \Aut(M_{\lda})$. 
By Proposition~\ref{corr2}, it is enough to show that 
$
\gamma(G, \Cal F(V,U_{l-1},M_{\lda}))
$
is a PORC function of $p$. The group $\GL(V/U_{l-1})\times \GL(M_{\lda}[p])$ 
acts by linear maps on 
$\Hom \left(\bigwedge^2 (V/U_{l-1}), M_{\lda}[p] \right)$, and 
the group $\GL(V)\times \GL(M_{\lda}/p M_{\lda})$ acts on 
$V^* \otimes (M_{\lda}/p M_{\lda})$. 
Combining these actions, we obtain a homomorphism
$$
\Gamma: \GL(V)\times \GL(V/U_{l-1}) \times \GL(M_{\lda}[p])
\times \GL(M_{\lda}/p M_{\lda}) 
\lra \GL(\Cal F(V,U_{l-1},M_{\lda})).  
$$ 
It is easy to see that $\Gamma$ gives rise to an algebraic family of groups.
Moreover, the action of $G$ on $\Cal F(V,U_{l-1},M_{\lda})$ is given by
$\Gamma \circ \Psi$ where 
$$
\Psi: G \lra \GL(V)\times \GL(V/U_{l-1}) \times \GL(M_{\lda}[p])
\times \GL(M_{\lda}/p M_{\lda})
$$
is the natural homomorphism. Thus, by Theorem \ref{Hig}, 
it suffices to prove that $\im \Psi$ is a uniform family of subgroups, as 
$p$ varies.
$\Psi$ splits into the obvious maps
$$
\alpha: \scr P(V; U_1,\ldots,U_{l-1}) \lra \GL(V)\times \GL(V/U_{l-1}) \quad \text{and } 
$$  
$$
\beta: \Aut(M_{\lda}) \lra \GL(M_{\lda}[p]) \times \GL(M_{\lda}/p M_{\lda}). 
$$
The image of $\beta$ is a uniform family of subgroups by 
Proposition \ref{uniB}. The image of $\alpha$ is a uniform family of subgroups
by Proposition \ref{Hall:gen} and Lemma \ref{subproj}. The image of $\Psi$ 
is the direct product of $\im\alpha$ and $\im\beta$. By Lemma \ref{uniprod},
$\im\Psi$ is a uniform family of subgroups.
\end{proof}

We are now in a position to finish the proof of Theorem \ref{Lie}. 
Let $X_{m,\mbf d,\lda}(p)$ be the
number of elements $[(E,B; W_1,\ldots,W_{l-1})]$ of $F_{m,\mbf d,\lda}(p)$ such that
$W_{l-1}=Z(E)$. Using reverse induction on $l$, we shall show that
$X_{m,\mbf d, \lda}(p)$ is a PORC function of $p$.

The base case is $l=m+1$. Then $\mbf d=(1,1,\ldots,1,0)$, so $W_{l-1}=E=Z(E)$. 
Hence, $X_{m,\mbf d,\lda}(p)=|F_{m,\mbf d,\lda}(p)|$ is PORC by Lemma \ref{key}.  

In the general case, for a fixed $k\in [1,d_l]$, elements $[(E,B; W_1,\ldots,W_l)]$ of 
$F_{m,\mbf d,\lda}$ with $\dim(Z(E)/W_{l-1})=k$ 
are in a one-to-one correspondence with 
the elements 
$$
[(E,B; W_1,\ldots,W_l)]\in  F_{m,(d_1,\ldots,d_{l-1},k,d_l-k),\lda}
$$ 
 satisfying $Z(E)=W_{l}$. (To obtain the correspondence, put 
$W_l=Z(E)$.)
 Hence, 
$$
X_{m,\mbf d,\lda}(p)=|F_{m,\mbf d,\lda}(p)|-
\sum_{k=1}^{d_l} X_{m,(d_1,\ldots,d_{l-1},k,d_l-k),\lda}(p). 
$$
The right-hand side is PORC by Lemma \ref{key} and the inductive hypothesis.
Therefore, the left-hand side is a PORC function of $p$ as well. Putting $l=1$, we obtain the following result.

\begin{thm} For any natural number $m$ and any partition $\lda$, the number of isomorphism classes of Lie algebras E over $\mZ_p$ of nilpotency class $2$ such that $E/Z(E)$ is elementary abelian of dimension $m$ and $Z(E)$ has elementary divisor type $\lda$, 
is a PORC function of $p$.   
\end{thm}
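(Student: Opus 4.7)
The plan is to take the reverse induction on $l$ just constructed for $X_{m, \mbf d, \lda}(p)$ and specialize at $l = 1$. With $l = 1$, the constraint $d_1 + \cdots + d_l = m$ forces $\mbf d = (m)$, and the chain of ideals $W_i$ collapses to just $W_0 = B$ and $W_1 = E$. Thus $F_{m, (m), \lda}(p)$ parametrizes isomorphism classes of pairs $(E, B)$, where $E$ is a finite $\mZ_p$-Lie algebra of class at most $2$, $B$ is a central ideal with $B \simeq M_\lda$, and $E/B$ is elementary abelian of $\mF_p$-dimension $m$. The quantity $X_{m, (m), \lda}(p)$ restricts to those pairs satisfying $B = W_{l-1} = Z(E)$, which by the inductive conclusion is a PORC function of $p$.

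It remains to identify $X_{m, (m), \lda}(p)$ with the count appearing in the theorem. The forgetful map $[(E, B)] \mapsto [E]$ is well-defined on the subset with $B = Z(E)$ because in that case $B$ is canonically determined by $E$, so an isomorphism of pairs coincides with an isomorphism of the underlying Lie algebras. This map is clearly surjective onto the set of isomorphism classes of Lie algebras $E$ with $Z(E) \simeq M_\lda$ and $E/Z(E)$ elementary abelian of dimension $m$ (take the preimage of $[E]$ to be $(E, Z(E))$ together with any chosen isomorphism $Z(E) \simeq M_\lda$), and injective for the same reason. The nilpotency class is exactly $2$ rather than merely at most $2$: if $m \ge 1$ then $Z(E) \ne E$, so $E$ is non-abelian; the edge case $m = 0$ contributes at most one abelian Lie algebra of type $\lda$ and is handled by Lemma \ref{PORClem}\eqref{finporc}.

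The main work, already completed in the preceding sections, is Lemma \ref{key}, which in turn rests on Theorem \ref{Hig}, the uniformity of $\Cal J_{\mbf d}(q)$ and $H^{\mbf u}(q)$ (Propositions \ref{Hall:gen} and \ref{uniB}), and the natural description of $\Cal L(A, B)$ in Proposition \ref{Lnat}. Given those ingredients, the final step is simply the specialization $l = 1$ combined with the bookkeeping above.
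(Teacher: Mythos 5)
Your proposal is correct and takes essentially the same route as the paper, which obtains the theorem precisely by specialising the reverse induction for $X_{m,\mbf d,\lda}(p)$ at $l=1$. The only difference is that you spell out the bookkeeping the paper leaves implicit — the bijection $[(E,B)]\mapsto[E]$ via the characteristic subalgebra $Z(E)$ and the ``class exactly $2$'' point — which is fine (the $m=0$ edge case is a constant function, hence trivially PORC, so the appeal to Lemma~\ref{PORClem}\eqref{finporc} is unnecessary but harmless).
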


Theorem \ref{Lie} follows by summing over the possible values of $m$ and $\lambda$.    

\bibliographystyle{amsplain}
\bibliography{porc}

\end{document}